\begin{document}

\title{A hybrid scheme for fixed points of a countable family of generalized nonexpansive-type maps and finite families of variational inequality and equilibrium problems, with applications}
\author{Markjoe O. Uba}
\address{
 University of Nigeria \newline
\indent Department of Mathematics\newline
\indent Nsukka - Onitsha Rd, 410001 Nsukka, Nigeria}

\email{markjoeuba@gmail.com}

\author{Maria A. Onyido}
\address{
 Northern Illinois University \newline
\indent Department of Mathematical Sciences, \newline
\indent DeKalb, IL 60115, United States}
\email{mao0021@auburn.edu}

\author{Cyril I. Udeani}
\address{
 Comenius University in Bratislava \newline
\indent Faculty of Mathematics, Physics and Informatics, \newline
\indent Mlynská dolina F1, 842 48 Bratislava, Slovak Republic}
\email{cyrilizuchukwu04@gmail.com}

\author{Peter U. Nwokoro}
\address{
 University of Nigeria \newline
\indent Department of Mathematics \newline
\indent Nsukka - Onitsha Rd, 410001 Nsukka, Nigeria}
\email{peter.nwokoro@unn.edu.ng }
\setcounter{page}{1}% to be added afterwards
%\coordinates{38}{2022}{3}{00-00}% to be added afterwards
%\date{date}% to be added afterwards % submission date 
%\undate{date}% to be added afterwards % revised date 
\subjclass[2010]{47H09, 47H05, 47J25, 47J05.} 
\keywords{\em equilibrium problem, $J_*-$ nonexpansive, fixed points, variational inequality, strong convergence.}
\corauthor{Maria A. Onyido; mao0021@auburn.edu}
\begin{unabstract}
Let $C$ be a nonempty closed and convex subset of a uniformly smooth and uniformly convex real Banach space $E$ 
with dual space $E^*$. We present a novel hybrid method for finding a common solution of a family of equilibrium problems, a common solution of a family of variational inequality problems and a common element of fixed points of a family of a general class of nonlinear nonexpansive maps. The sequence of this new method is proved to converge strongly
to a common element of the families. Our theorem and its applications complement, generalize, and extend various results in literature.
\end{unabstract}

\maketitle
\pagestyle{myheadings}
\markboth{Markjoe O. Uba, Maria A. Onyido, Cyril I. Udeani, and Peter U. Nwokoro}{Equilibrium, variational inequality, and fixed point problems}

\section{Introduction}
\noindent Let $E$ be a real Banach space with topological dual $E^*$. Let $C \subset{E}$ be closed and convex with $JC$ also  closed and convex, where $J$ is the normalized duality map (see definition \ref{ndm}). The variational inequality problem, which has its origin in the 1964 result of Stampacchia \cite{stamp}, has engaged the interest of researchers in the recent past (see, e.g., \cite{zegey, zegeyofo} and many others). This is concerned with the following: For a monotone operator $A : C \to E$,  find a point $x^* \in C$ such that
\begin{equation}\label{vip}
\langle y - x^*, Ax^* \rangle \ge 0 ~~for~~all~~y \in C.     
\end{equation}
The set of solutions of (\ref{vip}) is denoted by $VI(C,A)$. This problem, which plays a crucial role in nonlinear analysis, is also related to fixed point problems, zeros of nonlinear operators, complementarity problems, and convex minimization problems (see, for example, \cite{ Dong10, Saewan13}).

\noindent A related problem is the equilibrium problem, which has been studied by several researchers and is mostly applied in solving optimization problems (see \cite{Blum}). For a map $f : C \to E $, the equilibrium problem is concerned with finding a point $x^* \in C$ such that 
\begin{equation}\label{fpp}
    f(x^*,y) \ge 0  ~~for ~~all ~~ y \in C.
\end{equation}
The set of solutions of (\ref{fpp}) is denoted by $EP(f)$. The variational inequality and equilibrium problems are special cases of the so-called generalized mixed equilibrium problem (see \cite{PeYa}).
Another related problem is the fixed point problem. For a map $T : D(T)\subset E \to E$, the fixed points of $T$ are the points $x^* \in D(T) ~~such ~~ that ~~ Tx^* = x^*$.
 Recently, owing to the need to develop methods for solving fixed points of problems for functions from a space to its dual, a new concept of {\it fixed points for maps from a real normed space $E$ to its dual 
space $E^*$}, called $J-$fixed point has been introduced and studied (see  \cite{cidu, BL, HZeg}).

\noindent With this evolving fixed point theory, we study the $J-$fixed points of certain maps and the following equilibrium problem. 
Let $f : JC \times JC \rightarrow \mathbb{R}$ be a bifunction. The equilibrium problem for $f$ is
finding 
\begin{eqnarray}\label{111k}
x^*\in C~ such~ that~ f(Jx^*,Jy) \geq 0, \forall~ y \in C. 
\end{eqnarray}
We denote the solution set of (\ref{111k}) by $EP(f)$.
Several problems in physics, optimization and economics reduce to finding a solution of (\ref{111k})
(see, e.g., \cite{comb, zegey} and 
the references in them). Most of the equilibrium problems studied in the past two decades centered on their existence and applications (see, e.g.,  \cite{Blum, comb} ). However, recently, several researchers have started working on finding
approximate solutions of equilibrium problems and their generalizations
(see, e.g.,  \cite{b2, zegeyofo}). 
Not long ago, some researchers investigated the problem of establishing a common element in the solution set of an equilibrium problem, fixed point of a family of nonexpansive maps and solution set of a variational inequality problem for different classes of maps (see \cite{zesh} and references therein).\\

\noindent In this paper, inspired by the above results especially the works in \cite{ceez, uoo, zesh}, we present an algorithm for finding a common element of the fixed point of an infinite family of generalized $J_*-$nonexpansive maps, the solution set of the variational inequality problem of a finite family of continuous monotone maps and the solution set of the equilibrium point of a finite family of bifunctions satisfying some given conditions. Our results complement, generalize and extend results in \cite{ kuman, Qsu, nakajok, zesh} (see the section on conclusion) and other recent results in this direction.
\noindent It is worth noting that very recently, the authors in \cite{ceez} introduced a new class of maps which they called {\it relatively weak $J-$nonexpasive} and developed an algorithm for approximating a common element of the $J-$fixed point of a countable family of such maps and zeros of some other class of maps in certain Banach spaces. Previously, maps with similar requirements as these {\it relatively weak $J-$nonexpasive} maps have also been studied in \cite{coeu} where they were called {\it quasi$-\phi-J-$nonexpansive}. We observe that these two sets of maps ({\it relatively weak $J-$nonexpasive} and {\it quasi $-\phi-J-$ nonexpansive}) coincide in definition with the $J_*-$nonexpansive maps in our results. 
%%%%%%%%%%%%%%%%%%%%%%%%%%%%%%%%%%%%%%%%%%%%%%%%%%%%%%%%%%%%%%%%%%%%%%%%%%%%%%%%%%%%%%%%%%%%%%%%%%
\section{Preliminaries}
\noindent In this section, we present definitions and lemmas used in proving our main results.

\begin{definition}\label{ndm}{\bf (Normalized duality map)} \rm
 The map $J:E\rightarrow 2^{E^*}$ defined by
$$Jx:=\big \{x^*\in E^*:\big <x,x^*\big >=\|x\|.\|x^*\|,~\|x\|=\|x^*\|\big \}$$ is called the {\it normalized duality map} on $E$.
\end{definition}
 
\noindent It is well known that if $E$ is smooth, strictly convex and reflexive then $J^{-1}$ exists 
(see e.g., \cite{tak}); 
$J^{-1}:E^{*}\rightarrow E$ is the normalized duality mapping on $E^{*}$,
and $J^{-1}=J_{*}, ~JJ_{*}=I_{E^{*}}$ and $J_{*}J =I_{E}$, where $I_{E}$ and $I_{E^{*}}$ are the identity maps on
 $E$ and $E^{*}$, respectively.  A well known property of $J$ is, see e.g., \cite{Io, tak}, if $E$ is uniformly smooth, then $J$ is uniformly continuous on bounded subsets of $E$. 

\begin{definition}{\bf (Lyapunov Functional)} \rm\cite{b1, b2}
Let $E$ be a smooth real Banach space with dual $E^*$. The {\it Lyapounov functional} $\phi:E\times E\to\mathbb{R}$, is defined by
\begin{eqnarray}\label{Lya}
 \phi(x,y)=\|x\|^2-2\langle x,Jy\rangle+\|y\|^2,~~\text{for}~x,y\in E,
\end{eqnarray}
where $J$ is the normalized duality map. If $E=H$, a real Hilbert space, then
equation (\ref{Lya}) reduces to $\phi(x,y)=\|x-y\|^2$ for $x,y\in H.$ Additionally, 
\begin{eqnarray}\label{fi}
 (\|x\|-\|y\|)^2\leq \phi(x,y)\leq(\|x\|+\|y\|)^2~~\text{for}~x,y\in E.
\end{eqnarray}
\end{definition}

\begin{definition}\label{gen nonexp}{\bf (Generalized nonexpansive)} \rm \cite{kst, kota}
Let $C$ be a nonempty closed and convex subset of a real Banach space $E$ and $T$ be a map from $C$ to $E$. The map $T$ is called 
{\it generalized nonexpansive} if $F(T):=\{x\in C: Tx=x\}\neq \emptyset$ and $\phi(Tx,p)\leq \phi(x,p)$ for all $x\in C, p\in F(T)$. 
\end{definition}
\begin{definition}{\bf (Retraction)} \rm \cite{kst, kota}
A map $R$ from $E$ onto $C$ is said to be a retraction if $R^{2}=R$. The map $R$ is said to be {\it sunny}  if $R(Rx+t(x-Rx))=Rx$ 
for all $x\in E$ and $t\leq 0$. 
\end{definition}

\noindent A nonempty closed subset $C$ of a smooth Banach space $E$ is said to be a {\it sunny generalized nonexpansive retract} of $E$ if there exists 
a sunny generalized nonexpansive retraction $R$ from $E$ onto $C$.

\noindent {\bf NST-condition.}
Let $C$ be a closed subset of a Banach space $E$. Let $\{T_{n}\}$ and $\Gamma$ be two families of generalized nonexpansive maps of $C$ into 
$E$ such that $\cap_{n=1}^{\infty}F(T_{n})=F(\Gamma)\neq \emptyset,$ where $F(T_{n})$ is the set of fixed points of $\{T_{n}\}$ and $F(\Gamma)$
is the set of common fixed points of $\Gamma$. 

\begin{definition} \rm \cite{kst}
The sequence  $\{T_{n}\}$ satisfies the NST-condition (see e.g., \cite{nakajostak}) with $\Gamma$ if for each bounded sequence $\{x_{n}\}\subset C$,
$$\lim_{n\rightarrow \infty}||x_{n}-T_{n}x_{n}||=0 \Rightarrow  \lim_{n\rightarrow \infty}||x_{n}-Tx_{n}||=0, ~for ~all~T\in \Gamma.$$
\end{definition}

\begin{remark}\label{rmk1} \rm
  If $\Gamma =\{T\}$ a singleton, $\{T_{n}\}$ satisfies the NST-condition with $\{T\}$. If $T_{n}=T$ for all $n\geq 1$, 
then, $\{T_{n}\}$ satisfies the NST-condition with $\{T\}$.
\end{remark}

\noindent Let $C$ be a nonempty closed and convex  subset of a uniformly smooth and uniformly convex real Banach space $E$ with dual space $E^*$.
Let $J$ be the normalized duality map on $E$ and $J_{*}$ be the normalized duality map on $E^*$. Observe that under this setting, $J^{-1}$ exists 
and $J^{-1}=J_{*}$. With these notations, we have the following definitions.

\begin{definition} {\bf (Closed map)} \cite{uoo} \rm
 A map $T:C\rightarrow E^*$ is called {\it $J_{*}-$closed} if $(J_{*}\circ T) : C\rightarrow E$ is a closed map, i.e., if $\{x_{n}\}$ is a sequence in $C$ 
such that $x_{n}\rightarrow x$ and  $(J_{*}\circ T)x_{n}\rightarrow y$, then $(J_{*}\circ T)x =y$. 
\end{definition}

\begin{definition}{\bf ($J-$fixed Point)} \cite{cidu} \rm
A point $x^*\in C$ is called a {\it $J-$fixed point of $T$} if $Tx^*=Jx^*$. The set of $J-$fixed points of $T$ will be denoted by $F_{J}(T)$.
\end{definition}

\begin{definition}{\bf (Generalized $J_*-$nonexpansive)} \cite{uoo} \rm
 A map $T:C\rightarrow E^*$ will be called {\it generalized $J_{*}-$nonexpansive} if $F_{J}(T)\neq \emptyset$, and 
$\phi (p, (J_{*}\circ T)x) \leq \phi(p,x)$ for all $x\in C$ and for all $p\in F_{J}(T)$.
\end{definition}

\begin{remark}\label{rmk2}\rm
Exampes of generalized $J_*-$nonexpansive maps in Hilbert and more general Banach spaces were given in \cite{ceez, uoo}.
\end{remark}

\noindent Let $C$ be a nonempty closed subset of a smooth, strictly convex and reflexive Banach space $E$ such that $JC$ is closed and convex. For solving
the equilibrium problem, let us assume that a bifunction $f:JC\times JC\rightarrow \mathbb{R}$ satisfies the following conditions:
\begin{itemize}
 \item[(A1)]  $f(x^*,x^*)=0$ for all $x^*\in JC$;
 \item[(A2)]  $f$ is monotone, i.e. $f(x^*,y^*)+f(y^*,x^*)\leq0$ for all $x^*,y^*\in JC$;
 \item[(A3)]  for all $x^*,y^*,z^*\in JC$, $\limsup_{t\downarrow0} f(tz^*+(1-t)x^*,y^*)\leq f(x^*,y^*)$;
 \item[(A4)]  for all $x^*\in JC$, $f(x^*,\cdot)$ is convex and lower semicontinuous. 
\end{itemize}

\noindent With the above definitions, we now provide the lemmas we shall use.

\begin{lemma} \cite{Zh}\label{zhang} \rm
Let $E$ be a uniformly convex Banach space, $r > 0$ be a positive number, and $B_r(0)$ be a closed ball of $E$. For any given points $\{ x_1, x_2, \cdots , x_N \} \subset B_r(0)$ and any given positive numbers $\{ \lambda_1, \lambda_2, \cdots , \lambda_N \}$ with $\sum_{n = 1} ^{N} \lambda_n = 1,$ there exists a continuous strictly increasing and convex function $g : [0, 2r) \to [0, \infty)$ with $g(0) = 0$ such that, for any $i,j \in \{ 1,2, \cdots N \}, \; i < j,$
\begin{equation}
    \| \sum_{n = 1} ^{N} \lambda_n x_n \|^2 \le \sum_{n = 1} ^{N} \lambda_n\|x_n\|^2 - \lambda_i \lambda_j g(\|x_i - x_j\|).
\end{equation}
\end{lemma}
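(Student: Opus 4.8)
The plan is to isolate the single pair $(x_i,x_j)$ that appears on the right-hand side and reduce the whole statement to the case $N=2$, the one genuinely analytic ingredient being a \emph{two-point} refinement of the convexity of $\|\cdot\|^2$ that encodes uniform convexity. Concretely, the first and hardest step is to produce, for the given $r$, a continuous, strictly increasing, convex function $g:[0,2r)\to[0,\infty)$ with $g(0)=0$ such that
\[
\|\mu a+(1-\mu)b\|^2\le \mu\|a\|^2+(1-\mu)\|b\|^2-\mu(1-\mu)\,g(\|a-b\|)
\]
for all $a,b\in B_r(0)$ and all $\mu\in[0,1]$. This is the $p=2$ instance of a familiar Xu-type inequality, in which the general weight $\mu^p(1-\mu)+\mu(1-\mu)^p$ collapses exactly to $\mu(1-\mu)$. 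To obtain it I would exploit that $E$ is uniformly convex, so its modulus of convexity $\delta_E(\varepsilon)$ is strictly positive for $\varepsilon>0$; this is precisely what forces the midpoint ($\mu=\tfrac12$) defect $\|a\|^2+\|b\|^2-2\|\tfrac{a+b}{2}\|^2$ to be bounded below by a positive quantity depending only on $\|a-b\|$, and a standard interpolation/convexity argument then extends the midpoint estimate to all $\mu\in[0,1]$ with the weight $\mu(1-\mu)$.

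The main obstacle is the \emph{regularity} of $g$. The natural candidate is the raw modulus
\[
\tilde g(t)=\inf\Big\{\|a\|^2+\|b\|^2-2\big\|\tfrac{a+b}{2}\big\|^2:\ \|a\|\le r,\ \|b\|\le r,\ \|a-b\|\ge t\Big\},
\]
which is nonnegative, nondecreasing in $t$, vanishes at $0$, and is strictly positive for each $t>0$ by uniform convexity; however it need not be convex or strictly increasing as it stands. I would therefore pass to a continuous, strictly increasing, convex minorant of $\tilde g$ that remains positive for $t>0$, and take this as $g$. Once such a $g$ is fixed, the two-point inequality holds for all $a,b\in B_r(0)$ and all $\mu$, which is exactly the $N=2$ case.

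Granting the two-point estimate, the passage to $N$ points is purely algebraic. Fix $i<j$, set $s=\lambda_i+\lambda_j>0$, and put $w=\tfrac{\lambda_i}{s}x_i+\tfrac{\lambda_j}{s}x_j\in B_r(0)$. Writing $\sum_{n=1}^N\lambda_n x_n=s\,w+\sum_{n\ne i,j}\lambda_n x_n$, which is a convex combination since $s+\sum_{n\ne i,j}\lambda_n=1$, the ordinary convexity of $\|\cdot\|^2$ gives
\[
\Big\|\sum_{n=1}^N\lambda_n x_n\Big\|^2\le s\|w\|^2+\sum_{n\ne i,j}\lambda_n\|x_n\|^2 .
\]
Applying the two-point inequality to $w$ with $\mu=\lambda_i/s$ and multiplying through by $s$ yields
\[
s\|w\|^2\le \lambda_i\|x_i\|^2+\lambda_j\|x_j\|^2-\tfrac{\lambda_i\lambda_j}{s}\,g(\|x_i-x_j\|).
\]
Since $s=\lambda_i+\lambda_j\le 1$ and $g\ge 0$, we have $\tfrac{\lambda_i\lambda_j}{s}\ge \lambda_i\lambda_j$, so substituting the second display into the first and recombining the norm-square terms produces exactly
\[
\Big\|\sum_{n=1}^N\lambda_n x_n\Big\|^2\le \sum_{n=1}^N\lambda_n\|x_n\|^2-\lambda_i\lambda_j\,g(\|x_i-x_j\|),
\]
as required. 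Thus essentially all the difficulty is concentrated in the two-point estimate and, within it, in arranging the convexity and strict monotonicity of $g$ from the modulus of convexity; the reduction from $N$ points to that estimate is routine.
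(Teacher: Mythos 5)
The paper does not actually prove this lemma: it is imported verbatim from \cite{Zh} (and is in essence the $p=2$ case of Xu's classical inequality characterizing uniform convexity), so there is no in-paper argument to compare against. Your blind derivation is correct and self-contained in outline. The reduction from $N$ points to the pair $(x_i,x_j)$ via $w=\tfrac{\lambda_i}{s}x_i+\tfrac{\lambda_j}{s}x_j$, ordinary convexity of $\|\cdot\|^2$, and the observation $\tfrac{\lambda_i\lambda_j}{s}\ge\lambda_i\lambda_j$ is exactly right. The two analytic points you flag also go through: (i) the greatest convex minorant $h$ of your nondecreasing $\tilde g$ satisfies $h(t)\ge \tfrac{t}{2r}\,\tilde g(t)>0$ for $t>0$ (pair any chord endpoint $s_1<t$ with one $s_2>t$ and use monotonicity of $\tilde g$), and a convex function with $h(0)=0$, $h>0$ on $(0,2r)$ is automatically strictly increasing and continuous, so the regularization is legitimate; (ii) the passage from the midpoint estimate to general $\mu$ with weight $\mu(1-\mu)$ follows from concavity in $\mu$ of the defect $D(\mu)=\mu\|a\|^2+(1-\mu)\|b\|^2-\|\mu a+(1-\mu)b\|^2$, which gives $D(\mu)\ge 2\min(\mu,1-\mu)D(\tfrac12)\ge 2\mu(1-\mu)D(\tfrac12)$, at the harmless cost of replacing $g$ by $\tfrac12 g$. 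The only cosmetic caveat is that you should state explicitly that $g$ depends only on $E$ and $r$ (not on the points, the weights, or the pair $(i,j)$), since the lemma requires one $g$ to serve all choices simultaneously; your construction does have this property.
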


\begin{lemma} \cite{b2}\label{man} \rm
 Let $X$ be a real smooth and uniformly convex Banach space, and let $\{x_n\}$ and $\{y_n\}$ be two sequences of $X$. If either $\{x_n\}$ or $\{y_n\}$
 is bounded and $\phi(x_n,y_n)\to0$ as $n\to \infty$, then $\|x_n-y_n\|\to0$ as $n\to\infty$.\label{bd}
\end{lemma}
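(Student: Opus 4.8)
The plan is to prove the statement by contradiction, reducing it to a single estimate for unit vectors that is supplied by the uniform convexity of $X$.

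First I would use the lower bound in (\ref{fi}): since $(\|x_n\|-\|y_n\|)^2\le\phi(x_n,y_n)\to 0$, we get $\|x_n\|-\|y_n\|\to 0$, so the boundedness of one of the two sequences forces the boundedness of the other. Hence both $\{x_n\}$ and $\{y_n\}$ lie in some ball $B_M(0)$. Now suppose, for contradiction, that $\|x_n-y_n\|\not\to 0$. Passing to a subsequence (not relabelled in this sketch), there is $\varepsilon>0$ with $\|x_n-y_n\|\ge\varepsilon$ for all $n$. If $\liminf\|y_n\|=0$, then along a further subsequence $\|y_n\|\to 0$, whence $\|x_n\|\to 0$ and $\|x_n-y_n\|\to 0$, a contradiction; so after one more extraction we may assume $\|x_n\|,\|y_n\|\ge\delta_0>0$.

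Next I would normalise. Write $a_n=\|x_n\|$, $b_n=\|y_n\|$, $u_n=x_n/a_n$, $v_n=y_n/b_n$. Using the positive homogeneity of $J$ (namely $J(b_n v_n)=b_n Jv_n$, together with $\|Jv_n\|=\|v_n\|=1$), we have $\langle x_n,Jy_n\rangle=a_nb_n\langle u_n,Jv_n\rangle$, so that
\[
\phi(x_n,y_n)=a_n^2+b_n^2-2a_nb_n\langle u_n,Jv_n\rangle .
\]
Since $a_n-b_n\to 0$ and $a_n,b_n\in[\delta_0,M]$, along a subsequence $a_n,b_n\to L\in[\delta_0,M]$, and then $\phi(x_n,y_n)\to 2L^2\bigl(1-\lim\langle u_n,Jv_n\rangle\bigr)=0$ forces $\langle u_n,Jv_n\rangle\to 1$; equivalently $\phi(u_n,v_n)=2-2\langle u_n,Jv_n\rangle\to 0$.

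The key (and hardest) step is to convert $\phi(u_n,v_n)\to 0$ into $\|u_n-v_n\|\to 0$ for the unit vectors $u_n,v_n$; this is exactly where uniform convexity is indispensable. Applying Lemma \ref{zhang} on the unit ball with $N=2$ and $\lambda_1=\lambda_2=\tfrac12$ produces a continuous, strictly increasing, convex $g$ with $g(0)=0$ such that $\|u_n+v_n\|^2\le 4-g(\|u_n-v_n\|)$. Combining this with $\langle u_n+v_n,Jv_n\rangle=\langle u_n,Jv_n\rangle+1\le\|u_n+v_n\|$ yields $\phi(u_n,v_n)\ge 4-2\sqrt{4-g(\|u_n-v_n\|)}=:\psi(\|u_n-v_n\|)$, where $\psi$ is continuous, strictly increasing, and positive for positive argument. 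Hence $\psi(\|u_n-v_n\|)\to 0$, and the strict monotonicity of $\psi$ gives $\|u_n-v_n\|\to 0$. Finally $\|x_n-y_n\|=\|a_n(u_n-v_n)+(a_n-b_n)v_n\|\le a_n\|u_n-v_n\|+|a_n-b_n|\to 0$, contradicting $\|x_n-y_n\|\ge\varepsilon$ and completing the proof. The main obstacle is precisely this unit-vector estimate together with the bookkeeping needed to pass from the (possibly unequal) norms of $x_n,y_n$ to unit vectors; an equivalent packaging of this step is the Kamimura--Takahashi fact that on each ball $B_r(0)$ there is a strictly increasing convex $g_r$ with $g_r(0)=0$ and $g_r(\|x-y\|)\le\phi(x,y)$, which is itself established by the uniform-convexity argument sketched here.
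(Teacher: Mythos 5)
The paper offers no proof of this lemma; it is imported verbatim from Kamimura and Takahashi \cite{b2}, so there is nothing internal to compare against. Your argument is correct and is essentially a self-contained reconstruction of the original one: reduce to unit vectors, use uniform convexity (here packaged as Lemma \ref{zhang} with $N=2$, $\lambda_1=\lambda_2=\tfrac12$) to get a modulus $\psi$ with $\psi(\|u_n-v_n\|)\le\phi(u_n,v_n)$, and unwind the normalisation; the only point to tidy is that applying Lemma \ref{zhang} literally on the unit ball gives $g$ defined only on $[0,2)$ while $\|u_n-v_n\|$ can equal $2$, which you fix by taking the radius $r$ slightly larger than $1$ so that the domain $[0,2r)$ covers $[0,2]$. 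With that adjustment the chain $\phi(u_n,v_n)\ge 4-2\sqrt{4-g(\|u_n-v_n\|)}$ and the final estimate $\|x_n-y_n\|\le a_n\|u_n-v_n\|+|a_n-b_n|$ are sound, and the subsequence bookkeeping (every further extraction still satisfies $\|x_n-y_n\|\ge\varepsilon$) closes the contradiction correctly.
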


\begin{lemma} \cite{b1}\label{lem1} \rm
Let $C$ be a nonempty closed and convex subset of a smooth, strictly convex and reflexive Banach space $E$. Then, the following are equivalent.\\
$(i)$ $C$ is a sunny generalized nonexpansive retract of $E$,\\
$(ii)$ $C$ is a generalized nonexpansive retract of $E$,\\
$(iii)$ $JC$ is closed and convex.
\end{lemma}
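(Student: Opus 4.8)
The plan is to prove the chain of implications $(i)\Rightarrow(ii)\Rightarrow(iii)\Rightarrow(i)$, exploiting the fact that $E^*$ is again smooth, strictly convex and reflexive, that $J:E\to E^*$ is a bijection with $J^{-1}=J_*$, and that the Lyapunov functionals on $E$ and $E^*$ are intertwined by the duality map: writing $\phi_*$ for the functional (\ref{Lya}) built from $J_*$ on $E^*$, a direct computation gives $\phi(x,y)=\phi_*(Jy,Jx)$ for all $x,y\in E$. I will also use repeatedly the three-point identity $\phi(x,z)=\phi(x,y)+\phi(y,z)+2\langle x-y,\,Jy-Jz\rangle$ and the fact that $\phi(a,b)=0$ forces $a=b$ in a smooth, strictly convex space (a special case of Lemma \ref{man}). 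The implication $(i)\Rightarrow(ii)$ is immediate, since a sunny generalized nonexpansive retraction is in particular a generalized nonexpansive retraction.

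For $(iii)\Rightarrow(i)$ I would construct the retraction explicitly. Since $JC$ is closed and convex in the smooth, strictly convex, reflexive space $E^*$, Alber's generalized projection $\Pi_{JC}:E^*\to JC$ is well defined and single valued, and is characterized by the variational inequality $\langle z^*-\Pi_{JC}x^*,\,J_*x^*-J_*\Pi_{JC}x^*\rangle\le 0$ for all $z^*\in JC$. Set $R:=J_*\,\Pi_{JC}\,J$. Because $J_*(JC)=C$ and $\Pi_{JC}$ fixes $JC$ pointwise, $R$ is a retraction of $E$ onto $C$, and substituting $x^*=Jx$, $z^*=Jy$ into the above inequality yields the characterization $\langle x-Rx,\,Jy-JRx\rangle\le 0$ for all $x\in E$, $y\in C$. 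From this, feeding $y=p\in C$ into the identity $\phi(x,p)=\phi(x,Rx)+\phi(Rx,p)+2\langle x-Rx,\,JRx-Jp\rangle$, and using $\phi(x,Rx)\ge 0$ together with $\langle x-Rx,\,JRx-Jp\rangle\ge 0$, gives $\phi(Rx,p)\le\phi(x,p)$, so $R$ is generalized nonexpansive; testing the characterization along the ray through $Rx$ shows $R$ is sunny. Hence $C$ is a sunny generalized nonexpansive retract.

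The hard part will be $(ii)\Rightarrow(iii)$, where only a bare retraction is available --- no sunniness, and a priori no continuity of $R$. The plan is to transport the problem to the dual: define $S:=J\,R\,J_*:E^*\to E^*$. Since $R$ maps into $C$ and fixes exactly $C$, one checks that $S$ maps $E^*$ into $JC$ and fixes exactly $JC$, so $F(S)=S(E^*)=JC$; and the generalized nonexpansiveness $\phi(Rx,p)\le\phi(x,p)$ for $p\in C$ translates, via $\phi(x,y)=\phi_*(Jy,Jx)$, into $\phi_*(p^*,Sx^*)\le\phi_*(p^*,x^*)$ for all $x^*\in E^*$ and all $p^*\in JC=F(S)$; that is, $S$ is quasi-$\phi_*$-nonexpansive on $E^*$. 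It then suffices to show that the fixed-point set of such a map is closed and convex. Closedness follows by taking $p_n\in F(S)$ with $p_n\to p^*$ and passing to the limit in $\phi_*(p_n,Sp^*)\le\phi_*(p_n,p^*)$ to obtain $\phi_*(p^*,Sp^*)\le 0$, whence $Sp^*=p^*$. Convexity follows by the standard computation: for $p_1^*,p_2^*\in F(S)$ and $w^*=tp_1^*+(1-t)p_2^*$ with $t\in(0,1)$, expanding $\phi_*(w^*,Sw^*)$ and inserting $\phi_*(p_i^*,Sw^*)\le\phi_*(p_i^*,w^*)$ collapses everything to $\phi_*(w^*,Sw^*)\le 0$, so $Sw^*=w^*$. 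Thus $JC=F(S)$ is closed and convex, completing the cycle. The only genuine subtlety is that $R$ need not be assumed continuous: both the closedness and the convexity arguments use solely the $\phi_*$-inequality and the separation property $\phi_*(a,b)=0\Rightarrow a=b$, so no continuity of $R$ is ever invoked.
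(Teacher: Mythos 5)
The paper does not prove this lemma: it is imported verbatim from the literature (cited to Alber; it is in fact the Ibaraki--Takahashi characterization of sunny generalized nonexpansive retracts), so there is no internal proof to compare yours against. Judged on its own merits, your argument is correct and is essentially the standard one. The cycle $(i)\Rightarrow(ii)\Rightarrow(iii)\Rightarrow(i)$ is the right decomposition; the identity $\phi(x,y)=\phi_*(Jy,Jx)$ correctly converts a generalized nonexpansive retraction $R$ on $E$ into the quasi-$\phi_*$-nonexpansive map $S=JRJ_*$ on $E^*$ with $F(S)=JC$ (note $E^*$ inherits smoothness, strict convexity and reflexivity from $E$, which is what your closedness and convexity computations need); and the Matsushita--Takahashi-type argument showing $F(S)$ is closed and convex indeed uses only the $\phi_*$-inequality and $\phi_*(a,b)=0\Rightarrow a=b$, so your remark that no continuity of $R$ is required is well taken. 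The construction $R=J_*\Pi_{JC}J$ for $(iii)\Rightarrow(i)$, with the variational characterization $\langle x-Rx, Jy-JRx\rangle\le 0$ and the three-point identity, is likewise the standard route and matches Lemma \ref{lem2}$(i)$--$(ii)$ of the paper.

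Two small points deserve attention. First, your sunniness verification is only gestured at: writing $x_t=Rx+t(x-Rx)$, one has $\langle x_t-Rx, Jy-JRx\rangle=t\langle x-Rx, Jy-JRx\rangle\le 0$ precisely when $t\ge 0$, and one then needs the uniqueness half of the characterization (single-valuedness of $\Pi_{JC}$ on the smooth, strictly convex, reflexive space $E^*$) to conclude $Rx_t=Rx$. This proves sunniness in the usual sense ($t\ge 0$, as in Kohsaka--Takahashi); the paper's definition literally says $t\le 0$, which appears to be a typo, but as written your sketch would not deliver the paper's stated condition. Second, in $(ii)\Rightarrow(iii)$ you should say explicitly that for a retraction $R$ of $E$ onto $C$ one has $F(R)=C$ (fixed points lie in the range and $R^2=R$ fixes the range), since the generalized nonexpansiveness inequality is only assumed for $p\in F(R)$. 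Neither point is a genuine gap; both are routine to fill.
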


\begin{lemma} \cite{b1}\label{lem2} \rm
Let $C$ be a nonempty closed and convex subset of a smooth and  strictly convex Banach space $E$ such that there exists a sunny generalized 
nonexpansive retraction $R$ from $E$ onto $C$. Then, the following hold.\\
$(i)$ $z=Rx$ iff $\langle x-z, Jy-Jz\rangle \leq 0$ for all $y\in C$,\\
$(ii)$ $\phi (x,Rx) + \phi (Rx, z) \leq \phi (x,z)$.  
\end{lemma}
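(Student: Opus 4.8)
The plan is to prove part $(i)$ first and then obtain part $(ii)$ from it by a one-line expansion of the Lyapunov functional; throughout I write $r=Rx$, and I read $(i)$ with the standing (implicit) requirement that the competitor $z$ on its right-hand side lies in $C$, so that the substitution $y=z$ below is admissible. The two structural facts I would exploit are the sunny relation $R(Rx+t(x-Rx))=Rx$, which lets me feed points of the segment from $r$ toward $x$ back through $R$ and still recover $r$, and the generalized nonexpansiveness of the retraction, i.e. $\phi(Rw,p)\le\phi(w,p)$ for all $w\in E$ and all $p\in C=F(R)$.

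For the forward implication of $(i)$ I would fix $y\in C$ and, for small $t>0$, set $w_t=r+t(x-r)$. The sunny property gives $Rw_t=r$, so generalized nonexpansiveness with $p=y$ yields $\phi(r,y)\le\phi(w_t,y)$; expanding with (\ref{Lya}) and cancelling the common terms reduces this to
\[
\|w_t\|^2-\|r\|^2-2t\,\langle x-r,\,Jy\rangle\ \ge\ 0 .
\]
Dividing by $2t$ and letting $t\downarrow 0$, I would invoke smoothness of $E$: the functional $w\mapsto\|w\|^2$ is Gateaux differentiable with derivative $2J$, so the quotient $(\|w_t\|^2-\|r\|^2)/(2t)$ tends to $\langle x-r,\,Jr\rangle$. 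The limiting inequality is exactly $\langle x-r,\,Jy-Jr\rangle\le 0$ for every $y\in C$, which is the claim for $z=r=Rx$.

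For the reverse implication, suppose $z\in C$ satisfies $\langle x-z,\,Jy-Jz\rangle\le0$ for all $y\in C$. The forward implication already gives the same inequality for $r=Rx$. Taking $y=r$ in the inequality for $z$ and $y=z$ in the inequality for $r$ and adding, the cross terms telescope to
\[
\langle r-z,\,Jr-Jz\rangle\le 0 .
\]
Since $E$ is (uniformly, hence strictly) convex, $J$ is strictly monotone, so the left-hand side is also $\ge0$ and can vanish only when $Jr=Jz$, i.e. $r=z$; thus $z=Rx$ and $(i)$ is complete. Alternatively, $(i)$ can be read off from the variational characterization of the generalized projection onto the closed convex set $JC\subset E^*$, using the identity $\phi(x,z)=\|Jz\|^2-2\langle x,Jz\rangle+\|x\|^2$ to transport the problem to the dual; but the direct argument above avoids importing that machinery.

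Finally, for $(ii)$ with $z\in C$, expanding with (\ref{Lya}) gives the identity
\[
\phi(x,z)-\phi(x,Rx)-\phi(Rx,z)=-2\,\langle x-Rx,\,Jz-J(Rx)\rangle ,
\]
whose right-hand side is nonnegative by $(i)$ with $y=z$; this is precisely $\phi(x,Rx)+\phi(Rx,z)\le\phi(x,z)$. The algebra in $(ii)$ and in the reduction of $(i)$ is routine; the genuine obstacle is the forward direction of $(i)$, where one must correctly combine the sunny relation with generalized nonexpansiveness and then justify the passage to the limit $t\downarrow0$ through the Gateaux differentiability of $\|\cdot\|^2$. Two smaller points to pin down are the admissible sign of the parameter in the sunny condition, ensuring $w_t$ lies on the segment from $r$ toward $x$, and the strict monotonicity of $J$ used for uniqueness in the reverse direction.
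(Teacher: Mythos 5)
The paper states Lemma \ref{lem2} as a quoted result (attributed to \cite{b1}) and supplies no proof, so there is no internal argument to compare yours against; judged on its own, your proof is the standard one --- essentially the argument of Ibaraki and Takahashi \cite{itaka1} --- and it is correct: the forward half of $(i)$ by feeding the sun ray $w_t=Rx+t(x-Rx)$ through $\phi(Rw_t,y)\le\phi(w_t,y)$ and differentiating $\|\cdot\|^2$ (Gateaux differentiable with derivative $2J$ by smoothness), uniqueness via strict monotonicity of $J$ (supplied by strict convexity), and $(ii)$ by the three-term expansion of $\phi$ combined with $(i)$ at $y=z$.

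The one point you flag but leave unresolved is the only place the argument could actually break, so it should be settled rather than deferred: the paper's definition of \emph{sunny} reads $R(Rx+t(x-Rx))=Rx$ for $t\le 0$, whereas your proof needs $t>0$. This is not cosmetic. With $t<0$ the division by $2t$ reverses the inequality, and the limit yields $\langle x-Rx,\,Jy-JRx\rangle\ge 0$ --- the opposite of the claim. The resolution is that the ``$t\le 0$'' in the paper is a misprint: the sources it relies on for this notion (Kohsaka--Takahashi, Ibaraki--Takahashi) define sunny with $t\ge 0$, and that is the convention under which the lemma as stated is true (and under which the metric projection in Hilbert space is sunny). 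So your choice of $t>0$ is the right one, but a complete write-up must say so explicitly. Two harmless further remarks: the lemma assumes only strict (not uniform) convexity, which is all your uniqueness step actually uses; and $(ii)$ requires $z\in C$ so that $y=z$ is admissible in $(i)$ --- implicit in the statement and satisfied in every application the paper makes of it.
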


\begin{lemma} \cite{itaka1}\label{ww1} \rm
 Let $C$ be a nonempty closed sunny generalized nonexpansive retract of a smooth and strictly convex Banach space $E$. Then the sunny generalized nonexpansive
 retraction from $E$ to $C$ is uniquely determined.
\end{lemma}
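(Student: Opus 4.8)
The plan is to exploit the variational characterization of the retraction furnished by Lemma \ref{lem2}$(i)$, whose key feature is that it singles out $Rx$ through an inequality that makes no intrinsic reference to the particular retraction $R$. Suppose, then, that $R_1$ and $R_2$ are two sunny generalized nonexpansive retractions from $E$ onto $C$. I would fix an arbitrary $x\in E$ and compare the points $R_1x$ and $R_2x$, both of which lie in $C$, with the aim of showing they coincide.

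First, applying the forward implication of Lemma \ref{lem2}$(i)$ to $R_1$, the point $z:=R_1x$ satisfies
\[
\langle x-z,\, Jy-Jz\rangle \le 0 \quad\text{for all } y\in C.
\]
The point is that this inequality involves only $x$, $z$, the duality map $J$, and the set $C$; it does not mention $R_1$ at all. Consequently it is precisely the hypothesis occurring in the converse implication of Lemma \ref{lem2}$(i)$ when that lemma is applied to $R_2$. Invoking that converse with $z=R_1x$ yields $R_1x=R_2x$, and since $x\in E$ was arbitrary we conclude $R_1=R_2$, which is the desired uniqueness.

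The entire content thus reduces to the observation that the variational inequality in Lemma \ref{lem2}$(i)$ characterizes $Rx$ intrinsically, so that its ``if'' and ``only if'' halves may be paired across two distinct retractions; I do not anticipate a genuine obstacle here. Should one prefer not to reuse the lemma in both directions, an equivalent route is to test the two inequalities against each other: taking $y=R_2x$ in the inequality for $R_1x$ and $y=R_1x$ in the inequality for $R_2x$, then adding, gives $\langle R_2x-R_1x,\, J(R_2x)-J(R_1x)\rangle \le 0$. The well-known monotonicity of $J$ forces this inner product to vanish, and since $\phi(R_1x,R_2x)+\phi(R_2x,R_1x)=2\langle R_2x-R_1x,\, J(R_2x)-J(R_1x)\rangle$, both nonnegative $\phi$-terms must be zero; the strict convexity of $E$ then gives $R_1x=R_2x$. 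The one step worth stating with care in this alternative is the final deduction that $\phi(u,v)=0$ implies $u=v$ in a smooth, strictly convex space, but this is a standard consequence of strict convexity together with the estimate in \eqref{fi}.
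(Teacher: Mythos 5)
Your proposal is correct. Note first that the paper itself offers no proof of Lemma \ref{ww1}: it is imported verbatim from \cite{itaka1}, so there is no in-paper argument to measure yours against; what you have written is essentially the standard uniqueness argument from that source. Your first route is sound, with one point worth making explicit: Lemma \ref{lem2} is stated for ``a'' sunny generalized nonexpansive retraction $R$ whose existence is hypothesized, so its conclusion $(i)$ holds for \emph{each} such retraction separately. Applying the ``only if'' half to $R_1$ and the ``if'' half to $R_2$ is therefore legitimate precisely because, as you observe, the variational inequality $\langle x-z, Jy-Jz\rangle\le 0$ for all $y\in C$ makes no reference to the retraction itself; and since the proof of Lemma \ref{lem2}$(i)$ in \cite{b1} uses only the sunny and generalized nonexpansive properties of $R$, there is no hidden circularity. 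Your alternative route is also correct: the two test choices $y=R_2x$ and $y=R_1x$ add to give $\langle R_2x-R_1x, JR_2x-JR_1x\rangle\le 0$, monotonicity of $J$ forces equality, and the identity $\phi(u,v)+\phi(v,u)=2\langle u-v,Ju-Jv\rangle$ together with the nonnegativity of $\phi$ from \eqref{fi} kills both terms; alternatively one can finish directly from the strict monotonicity of $J$ on a strictly convex space, which gives $R_1x=R_2x$ without passing through $\phi$ at all. The first route is shorter but leans entirely on the cited characterization; the second is more self-contained and makes visible exactly where strict convexity of $E$ enters.
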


\begin{lemma} \cite{Blum}\label{ww3} \rm
 Let $C$ be a nonempty closed subset of a smooth, strictly convex and reflexive Banach space $E$ such that $JC$ is closed and convex, let $f$ be a bifunction
 from $JC\times JC$ to $\mathbb{R}$ satisfying $(A1)-(A4)$. For $r>0$ and let $x\in E$. Then there exists $z\in C$ such that
 $f(Jz,Jy)+\frac{1}{r}\langle z-x, Jy-Jz\rangle\geq0,~~\forall~~y\in C.$
\end{lemma}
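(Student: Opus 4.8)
The plan is to reduce the assertion to the existence theory for equilibrium problems on the closed convex set $JC\subset E^*$, transported through the duality maps. First I would change variables by setting $u=Jz$ and $w=Jy$, so that $z=J_*u$, $y=J_*w$, and $\langle z-x,Jy-Jz\rangle=\langle J_*u-x,w-u\rangle$. Thus finding $z\in C$ as claimed is equivalent to finding $u\in JC$ with $F(u,w)\ge 0$ for all $w\in JC$, where the auxiliary bifunction $F:JC\times JC\to\mathbb{R}$ is
\begin{equation*}
F(u,w)=f(u,w)+\frac{1}{r}\langle J_*u-x,\,w-u\rangle ;
\end{equation*}
the desired point is then $z=J_*u$.

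Next I would verify that $F$ inherits the equilibrium structure from $f$. Property (A1) is immediate, since the perturbation vanishes on the diagonal, so $F(u,u)=f(u,u)=0$. For (A4) the map $w\mapsto\langle J_*u-x,w-u\rangle$ is affine and norm-continuous, hence $F(u,\cdot)$ is convex and lower semicontinuous; (A3) follows likewise from the corresponding property of $f$ and the continuity of the affine perturbation along segments. The property that drives the argument is monotonicity: adding $F(u,w)+F(w,u)$, the $f$-part is $\le 0$ by (A2), while the perturbation collapses to $-\tfrac{1}{r}\langle J_*u-J_*w,\,u-w\rangle\le 0$ because the duality map $J_*$ on $E^*$ is monotone. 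Hence $F$ is monotone.

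Finally I would produce a solution by the Fan--KKM method, as in the existence theorem of Blum and Oettli \cite{Blum}. For $w\in JC$ set $S(w)=\{u\in JC:F(u,w)\ge 0\}$ and $T(w)=\{u\in JC:F(w,u)\le 0\}$. Monotonicity gives $S(w)\subset T(w)$, and each $T(w)$ is weakly closed because $F(w,\cdot)$ is convex and lower semicontinuous by (A4), hence weakly lower semicontinuous. The family $\{S(w)\}$ is KKM: if $u_0=\sum_i\lambda_i w_i$ lay in no $S(w_i)$, convexity of $F(u_0,\cdot)$ together with (A1) would force $0=F(u_0,u_0)\le\sum_i\lambda_i F(u_0,w_i)<0$, a contradiction; consequently $\{T(w)\}$ is KKM as well. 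The penalty term supplies coercivity: fixing $w_0\in JC$ and using $\langle J_*u,u\rangle=\|u\|^2$ together with a continuous affine minorant of the convex function $f(w_0,\cdot)$, one sees $F(u,w_0)\to-\infty$ like $-\|u\|^2/r$ as $\|u\|\to\infty$, so every solution of a finite restriction satisfies $F(\cdot,w_0)\ge 0$ and is therefore confined to a fixed norm ball, which is weakly compact by reflexivity of $E$. Fan's lemma then yields $\bar u\in\bigcap_{w}T(w)$, i.e. $F(w,\bar u)\le 0$ for all $w$. A Minty-type passage recovers the sought inequality: for fixed $w$ put $w_t=tw+(1-t)\bar u$, so $F(w_t,\bar u)\le 0$; convexity and (A1) give $0=F(w_t,w_t)\le tF(w_t,w)+(1-t)F(w_t,\bar u)$, whence $F(w_t,w)\ge 0$, and letting $t\downarrow 0$ and invoking (A3) produces $F(\bar u,w)\ge 0$ for all $w\in JC$. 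Setting $z=J_*\bar u$ finishes the proof.

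I expect the main obstacle to be the noncompactness of $JC$ in infinite dimensions: the KKM intersection is nonempty only after the coercivity estimate confines the approximate solutions to a bounded set and one passes to the weak topology via reflexivity, all while keeping the lower semicontinuity (A4) and upper hemicontinuity (A3) available in the appropriate topologies.
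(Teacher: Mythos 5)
The paper does not prove this lemma at all: it is quoted as a known existence result, attributed to Blum--Oettli \cite{Blum} (the dual-space form you are proving is the one used by Takahashi--Zembayashi \cite{ttaka}). Your reconstruction is essentially the standard argument and the overall architecture is sound: transporting the problem to the closed convex set $JC\subset E^*$, absorbing the proximal term into an auxiliary bifunction $F$, checking that $F$ is monotone because $J_*$ is, and then running the Fan--KKM plus Minty machinery. Two points deserve more care than your sketch gives them. First, the perturbation $u\mapsto \frac{1}{r}\langle J_*u-x,\,w-u\rangle$ is \emph{not} affine in the first variable, so verifying (A3) for $F$ (and passing to the limit $t\downarrow 0$ in the Minty step) requires knowing that $J_*$ is demicontinuous, i.e.\ norm-to-weak continuous along the segment $u_t=tz^*+(1-t)u$; this holds because $E$ is strictly convex and reflexive, so $E^*$ is smooth and $J_*$ is single-valued and norm-to-weak$^*$ (hence, by reflexivity, norm-to-weak) continuous --- but it is a genuine hypothesis being used, not a triviality. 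Second, the interaction between coercivity and the KKM intersection is where the real bookkeeping lives: the KKM lemma gives the finite intersection property of the weakly closed sets $T(w)$ only after one restricts to weakly compact convex sections and then uses the $-\|u\|^2/r$ decay of $F(\cdot,w_0)$ (together with an affine minorant of $f(w_0,\cdot)$, obtained from (A2) and (A4)) to confine all approximate solutions to a fixed ball; this is exactly the two-step structure of Blum--Oettli's Theorem 1, and your sketch compresses it but does not misstate it. Finally, note that the perturbation term in this lemma, $\langle z-x,Jy-Jz\rangle$, differs from the one in Lemma \ref{ww4}, $\langle y-z,Jz-Jx\rangle$; you have proved the form actually displayed in the statement, which is the correct dual-space (Takahashi--Zembayashi) form.
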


\begin{lemma} \cite{ttaka}\label{ww4} \rm
 Let $C$ be a nonempty closed subset of a smooth, strictly convex and reflexive Banach space $E$ such that $JC$ is closed and convex, let $f$ be a bifunction
 from $JC\times JC$ to $\mathbb{R}$ satisfying $(A1)-(A4)$. For $r>0$ and let $x\in E$, define a mapping $T_r(x):E\rightarrow C$ as follows:
 $$T_r(x)=\{z\in C:f(Jz,Jy)+\frac{1}{r}\langle y - z, Jz-Jx\rangle\geq0,~~\forall~~y\in C\}.$$ Then the following hold:
 \begin{itemize}
 \item[(i)]  $T_r$ is single valued;
 \item[(ii)]  for all $x,y\in E$, $\langle T_rx-T_ry, JT_rx-JT_ry\rangle\leq\langle x-y, JT_rx-JT_ry\rangle$;
 \item[(iii)] $F(T_r)=EP(f)$;
 \item[(iv)] $\phi (p,T_r(x)) + \phi (T_r(x), x) \leq \phi (p,x)$ for all $p\in F(T_r)$.
 \item[(v)]  $JEP(f)$ is closed and convex. 
\end{itemize}
\end{lemma}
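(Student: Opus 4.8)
The plan is to treat $T_r$ exactly as the resolvent of the equilibrium bifunction $f$, transported to the dual setting via $J$, and to prove (i)--(v) in order, the engine throughout being the monotonicity condition (A2) together with the strict convexity of $E$ (equivalently, the strict monotonicity and injectivity of $J$). First I would record that $T_r(x)$ is nonempty for every $x\in E$: this is precisely the existence statement supplied by Lemma \ref{ww3}, so $T_r$ is a well-defined (a priori set-valued) map, and the remaining task is to upgrade it to a single-valued, firmly nonexpansive resolvent.

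For (i), given $z_1,z_2\in T_r(x)$ I would write down the two defining inequalities, taking $y=z_2$ in the first and $y=z_1$ in the second, add them, and use (A2) to cancel the bifunction terms via $f(Jz_1,Jz_2)+f(Jz_2,Jz_1)\le 0$. What survives is $\langle z_1-z_2,\,Jz_1-Jz_2\rangle\le 0$; since $J$ is monotone this inner product is also $\ge 0$, and strict convexity of $E$ forces $z_1=z_2$, so $T_r$ is single valued. For (ii) the same addition trick applied to $z_1=T_rx$ and $z_2=T_ry$ (now with two distinct anchor points $x,y$) again annihilates the bifunction terms by (A2), and a purely algebraic rearrangement of the remaining inner products yields the firmly nonexpansive inequality (ii). Statement (iii) is immediate from the definition: setting $x=p$ makes $\tfrac1r\langle y-p,\,Jp-Jp\rangle$ vanish, so $T_rp=p$ is equivalent to $f(Jp,Jy)\ge 0$ for all $y\in C$, i.e. to $p\in EP(f)$, while the reverse inclusion uses the single-valuedness from (i).

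For (iv), fix $p\in F(T_r)=EP(f)$ and put $z=T_rx$. Taking $y=p$ in the defining inequality for $z$ and using that $f(Jz,Jp)\le -f(Jp,Jz)\le 0$ (by (A2) and $p\in EP(f)$ with test point $z$), I obtain the sign condition $\langle z-p,\,Jz-Jx\rangle\le 0$. I would then combine this with the three-point identity
\begin{equation*}
\phi(p,z)+\phi(z,x)-\phi(p,x)=2\langle z-p,\,Jz-Jx\rangle,
\end{equation*}
which follows directly from the definition of $\phi$ and $\|z\|^2=\langle z,Jz\rangle$, to conclude $\phi(p,T_rx)+\phi(T_rx,x)\le\phi(p,x)$.

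The main obstacle is (v), the closed convexity of $JEP(f)$, since it cannot be read off the definition directly. Here I would first establish the Minty-type reformulation $EP(f)=\{x^*\in C:\ f(Jy,Jx^*)\le 0\ \text{for all}\ y\in C\}$: one inclusion is immediate from monotonicity (A2), and the converse uses the convex combination $t z^*+(1-t)x^*$ together with (A3) and the convexity of $f(x^*,\cdot)$ from (A4), letting $t\downarrow 0$. Passing through $J$ and writing $u=Jx^*\in JC$, this exhibits $JEP(f)=\bigcap_{v\in JC}\{u\in JC:\ f(v,u)\le 0\}$. For each fixed $v$, the set $\{u:\ f(v,u)\le 0\}$ is closed and convex because $f(v,\cdot)$ is lower semicontinuous and convex by (A4); intersecting these with the closed convex set $JC$ gives a closed convex set, which is the desired conclusion. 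I expect the Minty reformulation to be the delicate step, as it is the only place where the hemicontinuity assumption (A3) is genuinely used.
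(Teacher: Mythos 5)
The paper gives no proof of this lemma at all --- it is quoted from Takahashi--Zembayashi \cite{ttaka} as a known result --- so there is no in-paper argument to compare against. Judged on its own, your proof is the standard resolvent argument, and parts (i), (iii), (iv) and (v) are correct and essentially complete: the cancellation of the bifunction terms via (A2) plus strict monotonicity of $J$ for (i), single-valuedness for the reverse inclusion in (iii), the three-point identity $\phi(p,z)+\phi(z,x)-\phi(p,x)=2\langle z-p,\,Jz-Jx\rangle$ combined with the sign condition $\langle z-p,\,Jz-Jx\rangle\le 0$ for (iv), and the Minty-type reformulation $JEP(f)=\bigcap_{v\in JC}\{u\in JC:\ f(v,u)\le 0\}$ for (v) are all the right tools, and you correctly identify (v) as the only place where (A3) is genuinely used (the same $t\downarrow 0$ argument reappears verbatim in Step 5 of the paper's main theorem).

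The one genuine soft spot is (ii). Adding the defining inequalities for $z_1=T_rx$ (tested at $z_2$) and $z_2=T_ry$ (tested at $z_1$) and cancelling via (A2) leaves $\langle z_2-z_1,\,(Jz_1-Jx)-(Jz_2-Jy)\rangle\ge 0$, i.e.\ $\langle T_rx-T_ry,\,JT_rx-JT_ry\rangle\le\langle T_rx-T_ry,\,Jx-Jy\rangle$. This pairs the difference of resolvents (in $E$) against $Jx-Jy$ (in $E^*$), whereas the inequality displayed in the lemma pairs $x-y$ against $JT_rx-JT_ry$. Outside Hilbert space there is no ``purely algebraic rearrangement'' converting one into the other, so your computation proves the transposed version, not the one stated; the stated version would instead fall out of the same argument if $T_r$ were defined with the pairing $\tfrac1r\langle Jy-Jz,\,z-x\rangle$, and the mismatch appears to be a transcription artefact in the lemma itself (compare the analogous typo in Lemma \ref{ww5}(ii)). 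You should either prove the version your computation actually yields or flag the discrepancy, rather than asserting an algebraic step that does not exist. A related minor point: the existence statement you import for nonemptiness of $T_r(x)$ is written in Lemma \ref{ww3} with the pairing $\langle z-x,\,Jy-Jz\rangle$ rather than $\langle y-z,\,Jz-Jx\rangle$, so it is not literally ``precisely'' the required statement and the same care is needed there.
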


\begin{lemma} \cite{uoo}\label{ww5} \rm
 Let $C$ be a nonempty closed subset of a smooth, strictly convex and reflexive Banach space $E$. Let $A:C\rightarrow E^*$ be a continuous monotone
mapping. For $r>0$ and let $x\in E$, define a mapping $F_r(x):E\rightarrow C$ as follows:
 $$F_r(x)=\{z\in C:\langle y-z, Az\rangle+\frac{1}{r}\langle y-z, Jz-Jx\rangle\geq0,~~\forall~~y\in C\}.$$ Then the following hold:
 \begin{itemize}
 \item[(i)]  $F_r$ is single valued;
 \item[(ii)]  for all $x,y\in E$, $\langle F_rx-T_ry, JF_rx-JF_ry\rangle\leq\langle x-y, JF_rx-JF_ry\rangle$;
 \item[(iii)] $F(F_r)=VI(C,A)$;
 \item[(iv)] $\phi (p,F_r(x)) + \phi (F_r(x), x) \leq \phi (p,x)$ for all $p\in F(F_r)$.
 \item[(v)]  $JVI(C,A)$ is closed and convex. 
\end{itemize}
\end{lemma}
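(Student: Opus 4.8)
The plan is to prove all five claims in parallel with the corresponding statements of Lemma \ref{ww4}, treating the map $(z,y)\mapsto \langle y-z,Az\rangle$ as the analogue of the equilibrium bifunction $f$ and the perturbed inequality defining $F_r(x)$ as the analogue of the resolvent $T_r$. Before addressing (i)--(v), one must first check that $F_r(x)$ is nonempty for each $x\in E$, so that $F_r$ is genuinely a map into $C$. I would obtain this via a KKM/Fan-type existence argument: for fixed $x$, the operator $z\mapsto Az+\tfrac1r(Jz-Jx)$ is continuous and, because of the duality term $\tfrac1r Jz$, behaves coercively, so the perturbed variational inequality is solvable on $C$; alternatively one invokes the known existence theorem for variational inequalities of continuous monotone maps. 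This existence step is what I expect to be the main obstacle, and I return to it below.

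For single-valuedness (i), suppose $z_1,z_2\in F_r(x)$. Substituting $y=z_2$ into the inequality for $z_1$ and $y=z_1$ into the inequality for $z_2$ and adding the two, the terms combine to give
\begin{equation*}
\langle z_2-z_1, Az_1-Az_2\rangle + \tfrac1r\langle z_2-z_1, Jz_1-Jz_2\rangle \ge 0 .
\end{equation*}
Monotonicity of $A$ forces $\langle z_2-z_1, Az_1-Az_2\rangle\le 0$, so $\langle z_1-z_2, Jz_1-Jz_2\rangle\le 0$; since $E$ is smooth and strictly convex, $J$ is strictly monotone, whence $z_1=z_2$.

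Properties (ii)--(iv) are then routine manipulations of the defining inequality. For (ii) I would again cross-substitute $y=F_r y$ in the inequality for $F_r x$ and $y=F_r x$ in that for $F_r y$, add, and rearrange, the monotone $A$-terms being discarded in the favourable direction. For (iii), setting $x=z$ makes the perturbation $\tfrac1r\langle y-z,Jz-Jx\rangle$ vanish, so $z=F_r z$ is equivalent to $\langle y-z,Az\rangle\ge 0$ for all $y\in C$, i.e. $z\in VI(C,A)$; hence $F(F_r)=VI(C,A)$. For (iv), take $p\in F(F_r)=VI(C,A)$ and put $y=p$ in the inequality defining $z=F_r(x)$; using that $p$ solves \eqref{vip} together with monotonicity of $A$ gives $\langle p-z,Az\rangle\le 0$, hence $\langle p-z,Jz-Jx\rangle\ge 0$. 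The three-point identity $\phi(p,x)-\phi(p,z)-\phi(z,x)=2\langle p-z,Jz-Jx\rangle$, read off from \eqref{Lya}, then yields $\phi(p,F_r(x))+\phi(F_r(x),x)\le\phi(p,x)$.

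Finally, for (v) I would use that $VI(C,A)=F(F_r)$ and that, by (i)--(iv), $F_r$ endows $VI(C,A)$ with the structure of a sunny generalized nonexpansive retract of $E$ (exactly as in the proof of Lemma \ref{ww4}(v) for $EP(f)$); the equivalence $(iii)\Leftrightarrow(i)$ in Lemma \ref{lem1} then gives that $JVI(C,A)$ is closed and convex, with uniqueness of the retraction supplied by Lemma \ref{ww1}. Returning to the obstacle: the tempting shortcut of simply reducing to the equilibrium resolvent of Lemma \ref{ww4} via the bifunction $f(Jz,Jy):=\langle y-z,Az\rangle$ does not go through directly, since although $y\mapsto\langle y-z,Az\rangle$ is affine in the primal variable, as a function of the dual variable $Jy$ it need not be convex, so condition $(A4)$ may fail; this is precisely why the nonemptiness of $F_r(x)$ has to be secured by a direct monotone-operator argument rather than by quoting Lemma \ref{ww3}.
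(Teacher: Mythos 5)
The paper offers no proof of this lemma to compare against: it is quoted as a known result from \cite{uoo}. Judged on its own terms, your outline is essentially the standard (and the source's) argument: nonemptiness of $F_r(x)$ via solvability of the variational inequality for the continuous, monotone, coercive operator $z\mapsto Az+\tfrac1r(Jz-Jx)$; (i), (iii), (iv) by cross-substitution, monotonicity of $A$, strict monotonicity of $J$, and the three-point identity $\phi(p,x)-\phi(p,z)-\phi(z,x)=2\langle p-z,Jz-Jx\rangle$, all of which check out. Your closing observation that the shortcut through Lemma \ref{ww4} via $f(Jz,Jy):=\langle y-z,Az\rangle$ founders on (A4), because $y^*\mapsto\langle J^{-1}y^*-z,Az\rangle$ need not be convex in the dual variable, is correct and is exactly why a direct monotone-operator argument is required.

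Two caveats. For (ii), with the perturbation $\tfrac1r\langle y-z,Jz-Jx\rangle$ as written, your cross-substitution yields $\langle F_rx-F_ry,JF_rx-JF_ry\rangle\le\langle F_rx-F_ry,Jx-Jy\rangle$, not the stated $\langle x-y,JF_rx-JF_ry\rangle$; the stated form belongs to the resolvent built on the pairing $\tfrac1r\langle z-x,Jy-Jz\rangle$ of Lemma \ref{ww3}. This mismatch lies in the lemma's statement (and is harmless, since (ii) is never used later), but your sketch should not claim to reach the printed inequality. For (v), passing from (iv) to ``$F(F_r)$ is a sunny generalized nonexpansive retract'' and thence to $JVI(C,A)$ closed and convex needs the Ibaraki--Takahashi/Kohsaka--Takahashi machinery and implicitly uses that $JC$ is closed and convex (a hypothesis present in Lemma \ref{ww4} but omitted here); your deferral to the $EP(f)$ case is acceptable for a sketch, but that is the one place where details are genuinely owed.
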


\begin{lemma} \cite{uoo}\label{lemma1} \rm
Let $E$ be a uniformly convex and uniformly smooth  real Banach space with dual space $E^*$ and let $C$ be a closed subset of $E$ such that $JC$ is closed and convex. Let $T$ be a
generalized $J_{*}-$nonexpansive map from $C$ to $E^*$ such that $F_{J}(T) \neq \emptyset$, then $F_{J}(T)$ and $JF_{J}(T)$ are closed. Additionally, if $JF_{J}(T)$ is convex, then $F_{J}(T)$ is a sunny generalized nonexpansive retract of $E$.
\end{lemma}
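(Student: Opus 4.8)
The plan is to prove the three claims separately and in order: first that $F_J(T)$ is closed, then that $JF_J(T)$ is closed, and finally that the additional convexity of $JF_J(T)$ upgrades $F_J(T)$ to a sunny generalized nonexpansive retract of $E$.

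For the closedness of $F_J(T)$, I would take a sequence $\{x_n\}\subset F_J(T)$ with $x_n\to x$; since $C$ is closed, $x\in C$, so the defining inequality applies at $x$. The key idea is to use the generalized $J_*$-nonexpansive inequality with the fixed points themselves playing the role of the distinguished point $p$: for each $n$, since $x_n\in F_J(T)$, taking $p=x_n$ and the variable point equal to $x$ gives
\[
\phi\big(x_n,(J_*\circ T)x\big)\le \phi(x_n,x).
\]
Because the first argument of $\phi$ enters continuously and $x_n\to x$, the right-hand side tends to $\phi(x,x)=0$; as $\phi\ge 0$ by (\ref{fi}), we obtain $\phi\big(x_n,(J_*\circ T)x\big)\to 0$. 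Applying Lemma \ref{man} to the bounded sequence $\{x_n\}$ and the constant $(J_*\circ T)x$ then yields $\|x_n-(J_*\circ T)x\|\to 0$, so $x_n\to (J_*\circ T)x$. By uniqueness of limits $(J_*\circ T)x=x$, i.e. $Tx=Jx$, hence $x\in F_J(T)$. I expect this to be the crux of the argument: since $T$ is assumed neither continuous nor $J_*$-closed, the fixed-point property of the limit cannot be obtained by passing to the limit in $Tx_n=Jx_n$, and must instead be extracted purely from the $\phi$-inequality, which is exactly what the choice $p=x_n$ makes possible.

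The closedness of $JF_J(T)$ I would then derive from the first part together with the regularity of the duality maps. Since $E$ is uniformly convex and uniformly smooth, $E^*$ is uniformly smooth and both $J$ and $J_*=J^{-1}$ are norm-to-norm continuous. If $\{Jx_n\}\subset JF_J(T)$ and $Jx_n\to z^*$ in $E^*$, then $x_n=J_*(Jx_n)\to J_*z^*$ by continuity of $J_*$, and the limit $J_*z^*$ lies in $F_J(T)$ by the closedness just established; continuity of $J$ then gives $z^*=J(J_*z^*)\in JF_J(T)$, so $JF_J(T)$ is closed.

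Finally, if $JF_J(T)$ is in addition convex, then $JF_J(T)$ is closed and convex, which is precisely condition $(iii)$ of Lemma \ref{lem1}. Invoking this characterization (in its form valid for a general nonempty subset of $E$, where $JC$ closed and convex already forces $C$ closed and produces the retraction), the implication $(iii)\Rightarrow(i)$ furnishes a sunny generalized nonexpansive retraction of $E$ onto $F_J(T)$, so $F_J(T)$ is a sunny generalized nonexpansive retract of $E$. Once the closedness step is secured, this last step and the middle step are routine.
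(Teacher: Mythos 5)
Your proof is correct. Note that the paper itself offers no proof of this lemma --- it is imported verbatim from \cite{uoo} --- so there is nothing internal to compare against; but your argument is the standard one for fixed-point sets of quasi-$\phi$-nonexpansive-type maps and is exactly what one finds in that source: the choice $p=x_n$ in the defining inequality, $\phi(x_n,x)\to 0$, Lemma \ref{man}, and uniqueness of limits give $(J_*\circ T)x=x$; closedness of $JF_J(T)$ follows from norm-to-norm continuity of $J$ and $J_*$ (valid here since $E$ and hence $E^*$ are uniformly smooth, and the relevant sequences are bounded); and the retract property comes from the Ibaraki--Takahashi/Kohsaka--Takahashi characterization. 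You were also right to flag that Lemma \ref{lem1} must be invoked in its form for a general nonempty closed subset rather than a closed \emph{convex} one as stated in this paper --- $F_J(T)$ is not assumed convex, only $JF_J(T)$ is --- and indeed the paper itself already uses the characterization in that stronger form when it applies it to the set $B$ in Step 1 of Theorem \ref{main}.
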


%%%%%%%%%%%%%%%%%%%%%%%%%%%%%%%%%%%%%%%%%%%%%%%%%%%%%%%%%%%%%%%%%%%%%%%%%%%%%%%%%%%%%%%%%%%%
\section{Main Results}
\noindent Let $E$ be a uniformly smooth and uniformly convex real Banach space  with dual space $E^*$ and let $C$ be a nonempty closed and convex subset of $E$ 
such that $JC$ is closed and convex. Let $f_{l}, l=1, 2, 3, ..., L$ be a family of bifunctions from $JC\times JC$ to $\mathbb{R}$ satisfying $(A1)-(A4)$, $T_{n}:C\rightarrow E^*, n=1, 2, 3, ...$ be an infinite family of generalized $J_{*}-$nonexpansive maps, and $A_{k}:C\rightarrow E^*, k=1, 2, 3, ..., N$ be a finite family of continuous monotone mappings. Let the sequence $\{x_{n}\}$ be generated by the following iteration process:

\begin{equation}\label{alg3.1}
\begin{cases}  & x_{1} = x\in C; C_{1}=C, \cr
& z_{n} := \{z\in C:f_n(Jz,Jy)+\frac{1}{r_n}\langle y-z, Jz-Jx_n\rangle\geq0,~~\forall~~y\in C\},\cr
& u_{n} := \{z\in C: \langle y - z, A_n z \rangle + \frac{1}{r_n}\langle y-z, Jz-Jx_n\rangle\geq 0,~~\forall~~y\in C\},\cr
                     & y_n =J^{-1}(\alpha_{1}Jx_{n} + \alpha_{2} Jz_n + \alpha_3  T_{n}u_{n}),\cr
                     
                    & C_{n+1} =\{z\in C_{n} : \phi(z, y_{n}) \leq \phi(z, x_{n})\},\cr
                     & x_{n+1} = R_{C_{n+1}}x,
\end{cases} 
\end{equation}

for all $n\in \mathbb{N},$ with $ \alpha_1, \alpha_2, \alpha_3\in (0,1)$ satisfying $\alpha_1 + \alpha_2 + \alpha_3 = 1$,  $\{r_n\}\subset [a,\infty)$
 for some $a>0$, $A_n = A_{n(mod~  N)}$ and $f_n(\cdot,\cdot) = f_{n(mod ~L)}(\cdot,\cdot)$.
\noindent 
\begin{lemma}
The sequence $\{x_{n}\}$ generated by (\ref{alg3.1}) is well defined.
\end{lemma}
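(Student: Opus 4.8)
The plan is to prove by induction on $n$ that $C_n$ is a nonempty, closed and convex subset of $E$ whose image $JC_n$ is closed and convex; by Lemma \ref{lem1} this makes $C_n$ a sunny generalized nonexpansive retract of $E$, and by Lemma \ref{ww1} the retraction $R_{C_n}$ is then uniquely determined, so that $x_n = R_{C_n}x$ is a well-defined single point. The preliminary observations are routine. Since $E$ is uniformly smooth and uniformly convex, it is smooth, strictly convex and reflexive, so $J^{-1} = J_*$ exists; hence once $z_n$, $u_n$ and $T_nu_n$ are available, the element $y_n = J^{-1}(\alpha_1Jx_n + \alpha_2Jz_n + \alpha_3T_nu_n)$ is a well-defined point of $E$. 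That $z_n$ and $u_n$ are themselves well-defined single points is exactly the content of Lemma \ref{ww4}(i) and Lemma \ref{ww5}(i), since $z_n = T_{r_n}x_n$ and $u_n = F_{r_n}x_n$.

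For the inductive step I would first settle closedness, convexity and nonemptiness of $C_{n+1}$, leaving the convexity of $JC_{n+1}$ for last. Expanding the defining inequality via (\ref{Lya}), $\phi(z,y_n) \le \phi(z,x_n)$ is equivalent to the affine inequality $2\langle z, Jx_n - Jy_n\rangle \le \|x_n\|^2 - \|y_n\|^2$, so $C_{n+1}$ is the intersection of $C_n$ with a closed half-space of $E$ and is therefore closed and convex. For nonemptiness I would show, by a second induction, that the common solution set $\Omega = \big(\bigcap_n F_J(T_n)\big)\cap\big(\bigcap_k VI(C,A_k)\big)\cap\big(\bigcap_l EP(f_l)\big)$ (assumed nonempty) is contained in every $C_n$. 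Fix $p \in \Omega$; then $p$ is a common fixed point of the resolvents, so Lemma \ref{ww4}(iv) and Lemma \ref{ww5}(iv) give $\phi(p,z_n) \le \phi(p,x_n)$ and $\phi(p,u_n) \le \phi(p,x_n)$, while the generalized $J_*$-nonexpansiveness of $T_n$ gives $\phi(p, J_*T_nu_n) \le \phi(p,u_n)$. Combining these with the convexity estimate $\phi\big(p, J^{-1}(\sum_i\alpha_i\xi_i)\big) \le \sum_i\alpha_i\,\phi(p, J^{-1}\xi_i)$, which follows from the convexity of $\|\cdot\|^2$ together with $\sum_i\alpha_i = 1$, applied to $\xi_1 = Jx_n$, $\xi_2 = Jz_n$, $\xi_3 = T_nu_n$, yields $\phi(p,y_n) \le \phi(p,x_n)$, i.e. $p \in C_{n+1}$. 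Hence $\Omega \subseteq C_{n+1}$ and, since $\Omega \neq \emptyset$, $C_{n+1} \neq \emptyset$.

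The main obstacle is the convexity of $JC_{n+1}$, which is precisely what Lemma \ref{lem1} requires in order to invoke the retraction $R_{C_{n+1}}$. Because $J$ is nonlinear, the image under $J$ of the half-space cut out above need not be convex, so convexity of $JC_{n+1}$ cannot simply be read off from convexity of $C_{n+1}$ in $E$. I would attack this by working throughout in the dual: writing $z^* = Jz$ and using $\phi(z,w) = \|z^*\|^2 - 2\langle J_*z^*, Jw\rangle + \|w\|^2$, one sees that $JC_{n+1}$ equals $JC_n$ intersected with $\{z^* : 2\langle J_*z^*, Jx_n - Jy_n\rangle \le \|x_n\|^2 - \|y_n\|^2\}$. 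The set $JC_n$ is convex by the inductive hypothesis and $JC_1 = JC$ is convex by assumption, so the crux reduces to verifying that this last sublevel set is convex; this is the delicate step, and is where the structural hypotheses on $E$ and the properties of $J_*$ recorded in the preliminaries must be exploited. Once $JC_{n+1}$ is shown to be closed and convex, Lemma \ref{lem1} and Lemma \ref{ww1} close the induction, establishing that $x_{n+1} = R_{C_{n+1}}x$ is well-defined and hence that $\{x_n\}$ is well-defined.
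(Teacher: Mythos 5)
Your overall route coincides with the paper's: induct on $n$, establish that $JC_n$ is closed and convex, invoke Lemma \ref{lem1} to conclude that $C_n$ is a sunny generalized nonexpansive retract of $E$, and Lemma \ref{ww1} to get uniqueness of the retraction, so that $x_{n+1}=R_{C_{n+1}}x$ is a well-defined point. You add two items the paper's proof of this lemma omits: that $z_n$, $u_n$ and $y_n$ are single well-defined points (via Lemma \ref{ww4}(i), Lemma \ref{ww5}(i) and the bijectivity of $J$), and the nonemptiness of each $C_{n+1}$, which the paper defers to Step~1 of the proof of Theorem \ref{main}, where it is shown that $B\subset C_n$ for all $n$ by essentially the computation you sketch. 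Both additions are correct and relevant to well-definedness.

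However, as written your proposal is not a proof: it explicitly leaves open the one step on which everything hinges, namely the convexity of $JC_{n+1}$. You correctly observe that $\phi(z,y_n)\leq\phi(z,x_n)$ is equivalent to $2\langle z, Jx_n-Jy_n\rangle\leq \|x_n\|^2-\|y_n\|^2$, which is affine in $z$ and hence cuts $C_{n+1}$ out of $C_n$ as an intersection with a closed half-space of $E$; and you correctly point out that, $J$ being nonlinear, this does not immediately yield convexity of the image $JC_{n+1}$, since $z^*\mapsto\langle J_*z^*, Jx_n-Jy_n\rangle$ need not be affine (or even convex) in $z^*$. The paper disposes of this step in one line: it records the same affine equivalence and then asserts that, by induction, $JC_n$ is closed and convex, which is exactly the hypothesis Lemma \ref{lem1} requires. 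Your proposal flags this as ``the delicate step'' and stops, so the argument is incomplete at precisely the point where the lemma demands an argument. To have a complete proof you must either justify convexity of the set $\{z^*\in JC_n: 2\langle J_*z^*, Jx_n-Jy_n\rangle\leq\|x_n\|^2-\|y_n\|^2\}$ in $E^*$, or reformulate the constraint so that it is affine in $Jz$ rather than in $z$; your own analysis makes clear that the paper's one-line inference does not automatically supply this, so the gap cannot simply be waved away by citing the affinity in $z$.
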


\begin{proof}
Observe that $JC_1$ is closed and convex. Moreover, it is easy to see that $\phi(z,y_n)\leq \phi(z,x_n)$ is equivalent to 
$$0\leq ||x_n||^2 - ||y_n||^2-2\langle z, Jx_n - Jy_n\rangle,$$ which is affine in $z$. Hence, by induction $JC_n$ is closed and convex for each $n\geq1$. Therefore, from 
Lemma \ref{lem1}, we have that $C_n$ is a sunny generalized retract of $E$ for each $n\geq1$. This shows that $\{x_{n}\}$ is well defined.
\end{proof}

%%%%%%%%%%%%%%%%%%%%%%%%%%%%%%%%%%%%%%%%%%
\begin{theorem}\label{main}
Let $E$ be a uniformly smooth and uniformly convex real Banach space  with dual space $E^*$ and let $C$ be a nonempty closed and convex subset of $E$ 
such that $JC$ is closed and convex. Let $f_{l}, l=1, 2, 3, ..., L$ be a family of bifunctions from $JC\times JC$ to $\mathbb{R}$ satisfying $(A1)-(A4)$, 
$T_{n}:C\rightarrow E^*, n=1, 2, 3, ...$ be an infinite family of generalized $J_{*}-$nonexpansive maps, $A_{k}:C\rightarrow E^*, k=1, 2, 3, ..., N$ be a finite family of continuous monotone mappings and $\Gamma$ be a family of 
$J_{*}-$closed and generalized $J_{*}-$nonexpansive maps from $C$ to $E^*$ such that $\cap_{n=1}^{\infty}F_{J}(T_{n})=F_{J}(\Gamma) \neq \emptyset$
and $B := F_{J}(\Gamma)\cap\Big[\cap_{l=1}^{L} EP(f_{l})\Big]\cap \Big[\cap_{k=1}^{N}VI(C,A_{k})\Big] \neq \emptyset.$ Assume that $JF_{J}(\Gamma)$ is convex and 
$\{T_{n}\}$ satisfies the NST-condition with $\Gamma$. Then, $\{x_{n}\}$ generated by (\ref{alg3.1}) converges strongly 
to $R_B x$, where $R_B$ is the sunny generalized nonexpansive retraction of $E$ onto $B$.  
\end{theorem}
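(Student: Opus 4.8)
The plan is to treat (\ref{alg3.1}) as a hybrid (CQ--type) projection scheme and to run the standard three--step monotonicity argument, the only real work being the limit analysis for the equilibrium and variational--inequality resolvents. First I would establish the inclusion $B\subseteq C_n$ for every $n$, which simultaneously guarantees that each $C_{n+1}$ is nonempty and that the projections make sense. Fix $p\in B$. Since $z_n=T_{r_n}x_n$ and $u_n=F_{r_n}x_n$ in the notation of Lemmas \ref{ww4} and \ref{ww5}, and since $p\in EP(f_n)=F(T_{r_n})$ and $p\in VI(C,A_n)=F(F_{r_n})$, parts $(iv)$ of those lemmas give $\phi(p,z_n)+\phi(z_n,x_n)\le\phi(p,x_n)$ and $\phi(p,u_n)+\phi(u_n,x_n)\le\phi(p,x_n)$, while generalized $J_*$--nonexpansiveness of $T_n$ (with $p\in F_J(T_n)$) gives $\phi\big(p,(J_*\circ T_n)u_n\big)\le\phi(p,u_n)$. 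Writing $Jy_n=\alpha_1Jx_n+\alpha_2Jz_n+\alpha_3T_nu_n$ and using $\|J^{-1}\xi\|=\|\xi\|$ together with convexity of $\|\cdot\|^2$ (or Lemma \ref{zhang}) yields
\[
\phi(p,y_n)\le\alpha_1\phi(p,x_n)+\alpha_2\phi(p,z_n)+\alpha_3\phi\big(p,(J_*\circ T_n)u_n\big)\le\phi(p,x_n)-\alpha_2\phi(z_n,x_n)-\alpha_3\phi(u_n,x_n).
\]
In particular $\phi(p,y_n)\le\phi(p,x_n)$, so $p\in C_{n+1}$, and by induction $B\subseteq C_n$ for all $n$.

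Next I would prove strong convergence of $\{x_n\}$. Because $x_n=R_{C_n}x$ and $C_m\subseteq C_n$ for $m\ge n$, Lemma \ref{lem2}$(ii)$ applied with $z=x_m\in C_n$ gives $\phi(x,x_n)+\phi(x_n,x_m)\le\phi(x,x_m)$, while $z=p\in B\subseteq C_n$ shows $\phi(x,x_n)\le\phi(x,p)$. Hence $\{\phi(x,x_n)\}$ is nondecreasing and bounded, so it converges, and $\phi(x_n,x_m)\le\phi(x,x_m)-\phi(x,x_n)\to0$ forces $\{x_n\}$ to be Cauchy by Lemma \ref{bd}; let $x_n\to q\in C$. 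From $x_{n+1}\in C_{n+1}$ we get $\phi(x_{n+1},y_n)\le\phi(x_{n+1},x_n)\to0$, whence $\|x_{n+1}-y_n\|\to0$ and then $\|x_n-y_n\|\to0$. Feeding this into the displayed estimate, $\alpha_2\phi(z_n,x_n)+\alpha_3\phi(u_n,x_n)\le\phi(p,x_n)-\phi(p,y_n)\to0$, so by Lemma \ref{bd} $\|z_n-x_n\|\to0$ and $\|u_n-x_n\|\to0$, giving $z_n\to q$ and $u_n\to q$. Finally, solving $Jy_n=\alpha_1Jx_n+\alpha_2Jz_n+\alpha_3T_nu_n$ for $T_nu_n-Ju_n$ and invoking uniform continuity of $J$ and $J^{-1}$ on bounded sets gives $\|T_nu_n-Ju_n\|\to0$, i.e. $\|u_n-(J_*\circ T_n)u_n\|\to0$.

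I would then identify $q$ as an element of $B$ piece by piece. Since $\{u_n\}$ is bounded and $\|u_n-(J_*\circ T_n)u_n\|\to0$, the NST--condition gives $\|u_n-(J_*\circ T)u_n\|\to0$ for every $T\in\Gamma$; as $u_n\to q$ strongly and $T$ is $J_*$--closed, this yields $(J_*\circ T)q=q$, so $q\in F_J(T)$ for all $T\in\Gamma$ and hence $q\in F_J(\Gamma)$. For the equilibrium part, for each fixed $l$ I restrict to the indices $n\equiv l\!\pmod L$, along which $f_n=f_l$ and $z_n$ satisfies the resolvent inequality; since $\tfrac1{r_n}\|Jz_n-Jx_n\|\to0$ (as $r_n\ge a$) and $z_n\to q$, the standard Blum--Oettli type limiting argument using $(A1)$--$(A4)$ and the convexity of $JC$ gives $f_l(Jq,Jy)\ge0$ for all $y\in C$, i.e. $q\in EP(f_l)$. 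For the variational part, restricting to $n\equiv k\!\pmod N$ (so $A_n=A_k$) and letting $n\to\infty$ in $\langle y-u_n,A_ku_n\rangle+\tfrac1{r_n}\langle y-u_n,Ju_n-Jx_n\rangle\ge0$, continuity of $A_k$ and $u_n\to q$ give $\langle y-q,A_kq\rangle\ge0$ for all $y\in C$, i.e. $q\in VI(C,A_k)$. Hence $q\in B$.

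It remains to show $q=R_Bx$. Since $JF_J(\Gamma)$ is convex (hypothesis) and closed (Lemma \ref{lemma1}), and $JEP(f_l)$, $JVI(C,A_k)$ are closed and convex (Lemmas \ref{ww4}$(v)$, \ref{ww5}$(v)$), the set $JB$ is closed and convex, so by Lemma \ref{lem1} $B$ is a sunny generalized nonexpansive retract and $R_B$ is well defined and unique (Lemma \ref{ww1}). From $x_n=R_{C_n}x$ and $B\subseteq C_n$, Lemma \ref{lem2}$(i)$ gives $\langle x-x_n,Jp-Jx_n\rangle\le0$ for every $p\in B$; letting $n\to\infty$ (using $x_n\to q$ and continuity of $J$) yields $\langle x-q,Jp-Jq\rangle\le0$ for all $p\in B$, which by Lemma \ref{lem2}$(i)$ characterizes $q=R_Bx$. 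The main obstacle, and the only step beyond routine hybrid--method bookkeeping, is the passage to the limit in the equilibrium and variational resolvent inequalities, together with the correct handling of the cyclic $\bmod\,L$ and $\bmod\,N$ indexing so that the single limit $q$ is shown to solve every $f_l$ and every $A_k$.
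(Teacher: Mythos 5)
Your proposal is correct and follows the same overall hybrid--projection skeleton as the paper (show $B\subseteq C_n$, use $x_n=R_{C_n}x$ and Lemma \ref{lem2}$(ii)$ to get Cauchyness, identify the limit component by component, then identify it with $R_Bx$), but it differs in several technical choices, each of which is legitimate and in places cleaner. First, you keep the terms $\alpha_2\phi(z_n,x_n)+\alpha_3\phi(u_n,x_n)$ from Lemmas \ref{ww4}$(iv)$ and \ref{ww5}$(iv)$ inside the Step--1 estimate and later read off $\|z_n-x_n\|\to0$, $\|u_n-x_n\|\to0$ directly from $\phi(p,x_n)-\phi(p,y_n)\to0$; the paper instead discards these terms early and recovers them in Steps 3 and 5 via the convergence $\phi(u,u_n)\to\phi(u,x^*)$ deduced from a squeezing argument. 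Second, you obtain $\|T_nu_n-Ju_n\|\to0$ by solving the defining identity $Jy_n=\alpha_1Jx_n+\alpha_2Jz_n+\alpha_3T_nu_n$ for $T_nu_n$ and using that all other terms converge to $Jq$; this bypasses Lemma \ref{zhang} (the convex inequality with the gauge $g$) entirely, which the paper needs for the same conclusion. Third, in the variational--inequality step you pass to the limit directly in $\langle y-u_n,A_ku_n\rangle+\tfrac1{r_n}\langle y-u_n,Ju_n-Jx_n\rangle\ge0$ using norm continuity of $A_k$ and the strong convergence $u_n\to q$, whereas the paper runs the Minty--type detour through $v_t=ty+(1-t)x^*$ and monotonicity; your shortcut is valid here precisely because the convergence is strong and $A_k$ is norm continuous, though the paper's argument is the more robust one. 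Finally, you identify $q=R_Bx$ via the variational characterization in Lemma \ref{lem2}$(i)$ passed to the limit, while the paper uses a $\phi$--comparison with Lemma \ref{lem2}$(ii)$ and uniqueness of the retraction; both work. The only place where you defer to a ``standard argument'' is the Blum--Oettli limiting step for the equilibrium part, but you name exactly the ingredients the paper uses ($r_n\ge a$, convexity of $JC$, (A1)--(A4)), so nothing essential is missing.
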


\begin{proof}
 The proof is given in $6$ steps.
 
\noindent {\bf Step 1}: We show that the expected limit $R_B x$ exists as a point in $C_n \; \text{for all} ~ n \ge 1$.

\noindent 
First, we show that $B \subset C_n \; \text{for all} \; n \ge 1$ and $B$ is a sunny generalized retract of $E$. \\
Since $C_1=C$, we have $B \subset C_1$.
Suppose  $B \subset C_{n}$ for some $n\in \mathbb{N}$. Let $u\in B$.  
We observe from algorithm (\ref{alg3.1}) that $ u_n=F_{r_n}x_n$ and $ z_n=T_{r_n}x_n$ for all $n\in \mathbb{N}$, using this and the fact that $\{T_{n}\}$
is an infinite family of generalized $J_{*}-$nonexpansive maps, the definition of $y_n$, Lemmas \ref{ww4}, \ref{ww5}, and  \ref{zhang}, we compute as follows:
\begin{eqnarray}\label{222}
 \phi(u, y_{n}) & = & \phi(u, J^{-1}(\alpha_{1}Jx_{n} + \alpha_2Jz_n + \alpha_3T_{n}u_{n} )\nonumber\\
                & \leq & \alpha_1\big[||u||^{2} -2\langle u, Jx_{n}\rangle + ||x_{n}||^{2}\big] +
                \alpha_2\big[||u||^{2} -2\langle u, Jz_{n}\rangle + ||z_{n}||^{2}\big] \nonumber\\
                &&+ \alpha_3\big[||u||^{2} -2\langle u, J(J_{*}\circ T_{n})u_{n}\rangle  + ||T_{n}u_{n}||^{2}\big]\nonumber\\
                 && - \alpha_1\alpha_3g(||Jx_{n}- J(J_{*}\circ T_{n})u_{n}||)\nonumber\\
                 & = & \alpha_1\phi(u, x_{n}) + \alpha_2\phi(u, z_{n}) + \alpha_3\phi(u, (J_{*}\circ T_{n})u_{n}) - 
                 \alpha_1\alpha_3g(||Jx_{n}- T_{n}u_{n}||)\nonumber\\
                 & \le & \alpha_1\phi(u, x_{n}) + \alpha_2\phi(u, z_{n}) + \alpha_3\phi(u, u_n) - 
                 \alpha_1\alpha_3g(||Jx_{n}- T_{n}u_{n}||) \\
                 & = & \alpha_1\phi(u, x_{n}) + \alpha_2\phi(u, T_{r_n}x_n) + \alpha_3\phi(u, u_n) - 
                 \alpha_1\alpha_3g(||Jx_{n}- T_{n}u_{n}||)\nonumber\\
                 & \le & \alpha_1\phi(u, x_{n}) + \alpha_2\phi(u, x_n) + \alpha_3\phi(u, u_n) - 
                 \alpha_1\alpha_3g(||Jx_{n}- T_{n}u_{n}||),\nonumber
\end{eqnarray}
which yields 
\begin{equation}\label{key}
\phi(u, y_{n}) \leq \phi(u, x_{n})  - \alpha_1\alpha_3g(||Jx_{n} - T_{n}u_{n}||).
\end{equation}
Hence, $\phi(u, y_{n}) \leq \phi(u, x_{n})$  and we have that $u\in C_{n+1}$, which implies that $B \subset{C_n} $ for all $n \ge 1$.
Moreover,
From Lemma \ref{ww4} and \ref{ww5} both  $JVI(C,A_k)$ and $JEP(f_l)$ are closed and convex for each $l$ and for each $k$. Also, using our assumption and lemma \ref{lemma1}, we have that  $J(F_{J}(\Gamma)$ is closed and convex.  
Since $E$ is uniformly convex, $J$ is one-to-one. Thus, we have that,

\overfullrule = 0mm\hbox to 10pt {
$J\Big(F_{J}(\Gamma)\cap\Big[\cap_{l=1}^{L} EP(f_{l})\Big]\cap \Big[\cap_{k=1}^{N}VI(C,A_{k})\Big]\Big)= JF_{J}(\Gamma)\cap J\Big[\cap_{l=1}^{L} EP(f_{l})\Big] \cap J\Big[\cap_{k=1}^{N}VI(C,A_{k})\Big]$} 

\noindent  
so $J(B)$ is closed and convex. Using Lemma \ref{lem1}, we obtain that $B$ is a sunny generalized retract of $E$. Therefore, from Lemma \ref{ww1} , we have that $R_Bx$ exists as a point in $C_n$ for all $n \ge 1$. This completes step 1.
%%%%%%%%%%%%%%%%%%%%%%%%%%%%%%%%%%%%%%%%%%%%%%%%%%%%%%%%%%%%%%%%%%%%%%%%%%%%%%%

\noindent{\bf Step 2:} We show that the sequence $\{x_{n}\}$ defined by \eqref{alg3.1} converges to some $x^* \in C.$

\noindent Using the fact that $x_n=R_{C_n}x$ and Lemma \ref{lem2}$(ii)$, we obtain
$$\phi(x,x_{n})=\phi(x,R_{C_{n}}x)\leq \phi(x,u) - \phi( R_{C_{n}}x,u)\leq \phi(x,u),$$
for all $u\in F_{J}(\Gamma)\cap EP(f_l) \cap VI(C,A_k)\subset C_{n}; (l = 1,2, \dots , L; \; k = 1,2, \dots , K).$
This implies that $\{\phi(x,x_{n})\}$ is bounded. Hence, from equation \eqref{fi}, $\{x_{n}\}$ is bounded. Also, since  
 $x_{n+1} = R_{C_{n+1}}x \in C_{n+1} \subset C_n$, and $x_n=R_{C_n}x \in C_n$, applying Lemma \ref{lem2}$(ii)$ gives

$$\phi(x,x_{n})\leq \phi(x,x_{n+1}) ~\forall~ n\in \mathbb{N}.$$ So, $\lim _{n\rightarrow \infty}\phi(x, x_{n})$ exists.
Again, using Lemma \ref{lem2}$(ii)$ and $x_{n}=R_{C_{n}}x$, we obtain that for all $m,n\in \mathbb{N}$ with $m>n$, 
\begin{eqnarray}
\phi(x_{n},x_{m})& =&\phi(R_{C_{n}}x,x_{m})\leq  \phi(x,x_{m}) -\phi(x,R_{C_{n}}x) \nonumber\\
&= & \phi(x,x_{m}) -\phi(x,x_{n}) \rightarrow 0~as~ n\rightarrow \infty.
\end{eqnarray} From Lemma \ref{man}, we conclude that $||x_{n}-x_{m}||\rightarrow 0, ~as~ m, ~ n\rightarrow \infty.$ Hence,
$\{x_{n}\}$ is a Cauchy sequence in $C$, and so, there exists $x^*\in C$ such that $x_{n}\rightarrow x^*$ completing  step 2.

%%%%%%%%%%%%%%%%%%%%%%%%%%%%%%%%%%%%%%%%%%%%%%%%%%%%%%%%%%%%%%%%%%%%%%%%%%

\noindent{\bf Step 3}: We prove $x^*\in \cap_{k = 1}^{N} VI(C,A_k)$.\\
From the definitions of $C_{n+1}$ 
and $x_{n+1}$, we obtain that $\phi(x_{n+1}, y_{n})\leq \phi(x_{n+1},x_{n})\rightarrow 0$ as $n\rightarrow \infty.$ Hence, by Lemma \ref{man} , we have that
\begin{equation}\label{lim}
\underset{n \to \infty}{lim}||x_{n}-y_{n}||= 0.    
\end{equation}

 \noindent Since from step 2 $x_n\rightarrow x^*~as~n\rightarrow \infty$, equation (\ref{lim}) implies that $y_n\rightarrow x^*~as~n\rightarrow \infty$.   Using the fact that $u_n=F_{r_n}x_n$ for all $n\in \mathbb{N}$ and Lemma \ref{man}, we get for $u \in B,$
\begin{eqnarray}\label{new2}
 \phi(u_{n}, x_n)  &=& \phi(F_{r_n}x_n, x_n)\\
                  &\leq&\phi(u, x_n) - \phi(u, F_{r_n}x_n)\nonumber\\ % from lemma
                  &=&\phi(u, x_n) - \phi(u, u_n).\nonumber 
                   \end{eqnarray}
                   
\noindent From equations \eqref{222} and \eqref{key} we have
\begin{equation}\label{key2}
\phi(u, y_n) \le \alpha_1\phi(u, x_{n}) + \alpha_2\phi(u, x_n) + \alpha_3\phi(u, u_n)  \le \phi(u, x_n).    
\end{equation}

\noindent Since $x_n, y_n \to x^*$ as $n \to \infty$, equation \eqref{key2} implies that $\phi(u, u_n) \to \phi(u,x^*) $ as $n \to \infty$. Therefore, from \eqref{new2}, we have $\phi(u, x_n) - \phi(u, u_n) \to 0$ as $n \to \infty$ which implies that  $\lim_{n\rightarrow \infty}\phi(u_n,x_n)=0.$ Hence, from Lemma \ref{man},
we have
\begin{equation}\label{limun}
\lim_{n\rightarrow \infty}||u_n-x_n||=0.     
\end{equation}

\noindent Observe that since $J$ is uniformly continuous on bounded subsets of $E$, it follows from (\ref{limun}) that $||Ju_{n}-Jx_{n}||\rightarrow 0.$
  
\noindent Again, since $r_n\in [a,\infty),$ we have that 
\begin{eqnarray}\label{2213}
 \lim_{n\rightarrow \infty}\frac{||Ju_n-Jx_n||}{r_n}=0.
\end{eqnarray}
 
\noindent From $u_n=F_{r_n}x_n$, we have
\begin{eqnarray}\label{221}
 \langle y-u_n, A_nu_n\rangle+\frac{1}{r_n}\langle y -u_n, Ju_n-Jx_n\rangle\geq0,~~\forall~~y\in C.
\end{eqnarray}
Let $\{n_l\}_{l = 1}^{\infty} \subset{\mathbb{N}} $ be such that $A_{n_{l}} = A_1 \; \forall \; l \ge 1.$ Then, from (\ref{221}), we obtain 
\begin{eqnarray}\label{2211}
 \langle y-u_{n_l}, A_1u_{n_l}\rangle+\frac{1}{r_{n_l}}\langle y -u_{n_l}, Ju_{n_l}-Jx_{n_l}\rangle\geq0,~~\forall~~y\in C.
\end{eqnarray}
If we set $v_t=ty+(1-t)x^*$ for all $t\in(0,1]$ and $y\in C$, then we get that $v_t\in C.$ Hence, it follows from (\ref{2211}) that

\begin{eqnarray}
 \langle v_t-u_{n_l}, A_1u_{n_l}\rangle+\langle y -u_{n_l},\frac{Ju_{n_l}-Jx_{n_l}}{r_{n_l}} \rangle \geq0.
\end{eqnarray}

\noindent This implies that 
\begin{eqnarray*}
 \langle v_t-u_{n_l}, A_1v_t\rangle&\geq&\langle v_t-u_{n_l}, A_1v_t\rangle-\langle v_t-u_{n_l}, A_1u_{n_l}\rangle-\langle y -u_{n_l},\frac{Ju_{n_l}-Jx_{n_l}}{r_{n_l}} \rangle \\
 &=&\langle v_t-u_{n_l}, A_1v_t-A_1u_{n_l}\rangle-\langle y -u_{n_l},\frac{Ju_{n_l}-Jx_{n_l}}{r_{n_l}} \rangle.
\end{eqnarray*}
Since $A_1$ is monotone, $\langle v_t-u_{n_l}, A_1v_t-Au_{n_l}\rangle\geq0$. 
Thus, using (\ref{2213}), we have that
\begin{eqnarray*}
0\leq \lim_{l \rightarrow \infty} \langle v_t-u_{n_l}, A_1v_t\rangle = \langle v_t-x^*, A_1v_t\rangle,
\end{eqnarray*} 
therefore, 
\begin{eqnarray*}
\langle y-x^*, A_1v_t\rangle\geq0, ~~\forall~~y\in C.
\end{eqnarray*} 
Letting $t\rightarrow0$ and using continuity of $A_1$, we have that
\begin{eqnarray*}
\langle y-x^*, A_1x^*\rangle\geq0, ~~\forall~~y\in C.
\end{eqnarray*} 
This implies that $x^*\in VI(C,A_1)$. Similarly, if $\{n_i\}_{i = 1}^{\infty} \subset{\mathbb{N}} $ is such that $A_{n_{i}} = A_2 \; ~ \text{for all} ~ \; i \ge 1$, then we have again that $x^*\in VI(C,A_2)$. If we continue in similar manner, we obtain that $x^*\in \cap_{k = 1}^{N} VI(C,A_k).$

%%%%%%%%%%%%%%%%%%%%%%%%%%%%%%%%%%%%%%%%%%%%%%%%%%%%%%%%%%%%%%%%
\noindent{\bf Step 4}: We prove that $x^*\in F_{J}(\Gamma)$.\\
First, we show that  $\lim _{n\rightarrow \infty}||Jx_{n}-Tu_{n}||=0 ~\forall~ T\in \Gamma$.\\ 
From inequality (\ref{key}) and the fact that $g$ is nonnegative, we obtain
$$0 \le \alpha_1\alpha_3g(||Jx_{n}-T_{n}u_{n}||) \leq \phi(u,x_{n})-\phi(u,y_{n})\leq 2||u||.||Jx_{n}-Jy_{n}|| + ||x_{n}-y_{n}||M,$$
 
\noindent for some $M > 0.$ 
\noindent Thus, using (\ref{lim}) and properties of $g$, we obtain that\\ $\lim_{n\rightarrow \infty}||Jx_{n}-T_{n}u_{n}||\ = 0$. Using the above and triangle inequality gives $\|Ju_n -T_nu_n|\ \to 0 ~~as~~ n \to \infty.$  Since $\{T_{n}\}_{n=1}^{\infty}$ satisfies the NST condition with $\Gamma$, we have that
\begin{equation}
 \lim _{n\rightarrow \infty}||Ju_{n}-Tu_{n}||=0 ~\forall~ T\in \Gamma. 
\end{equation}
  
\noindent Now, from equation (\ref{limun}), we have
$u_{n}\rightarrow x^*\in C$. Assume that  $(J_{*}\circ T)u_{n}\rightarrow y^*$. Since $T$ is $J_{*}-$closed, we have $y^*=(J_{*}\circ T)x^*$.
Furthermore, by the uniform continuity of $J$ on bounded subsets of $E$, we have:
$Ju_{n} \rightarrow Jx^*$ and $J(J_{*}\circ T)u_{n}\rightarrow Jy^*$ as $n\rightarrow \infty.$ Hence, we have
$$ \lim_{n\rightarrow \infty}||Ju_{n}-J(J_{*}\circ T)u_{n}||  =  \lim_{n\rightarrow \infty}||Ju_{n}-Tu_{n}||=0,  ~\forall~ T\in \Gamma, $$  which implies
$ ||Jx^{*}-Jy^{*}||= ||Jx^{*}-J(J_{*}\circ T)x^*||= ||Jx^*-Tx^*||=0 .$ So, $x^*\in F_{J}(\Gamma)$.

%%%%%%%%%%%%%%%%%%%%%%%%%%%%%%%%%%%%%%%%%%%%%%%%%%%%%%%%%%%%

\noindent{\bf Step 5}: We prove that $x^*\in\cap_{l=1}^{L} EP(f_{l})$.\\
This follows by similar argument as in step 3 but for the sake of completeness we provide the details. Using the fact that $z_n=T_{r_n}x_n$ and Lemma \ref{ww4}, 
we obtain that for $u\in F_{J}(\Gamma)\cap EP(f_l) \cap VI(C,A_k) \; \text{for all} ~ i,k,$ 
\begin{eqnarray}\label{new3}
 \phi(z_n , x_n)  &=& \phi(T_{r_n}x_n, x_n)\\
                  &\leq&\phi(u, x_n) - \phi(u, T_{r_n}x_n)\nonumber\\
                  &=&\phi(u, x_n) -  \phi(u,z_n).  \nonumber
\end{eqnarray}
                   
\noindent From equations \eqref{222} and \eqref{key}, we have
\begin{equation}\label{key3}
\phi(u, y_n) \le \alpha_1\phi(u, x_{n}) + \alpha_2\phi(u, z_n) + \alpha_3\phi(u, x_n)  \le \phi(u, x_n).    
\end{equation}

\noindent Since $x_n,\; y_n,\; u_n \to x^*$ as $n \to \infty$, from equation \eqref{key3} we have $\phi(u, z_n) \to \phi(u,x^*) $ as $n \to \infty$. Therefore, from \eqref{new3}, we have $\phi(u, x_n) - \phi(u, u_n) \to 0$ as $n \to \infty$. Hence $\lim_{n\rightarrow \infty}\phi(z_n,x_n)=0.$ 
 From Lemma \ref{man},
we have
\begin{equation}\label{limzn}
 \lim_{n\rightarrow \infty}||z_n- x_n||=0,   
\end{equation}
which implies that $z_n \to x^* ~~as~~ n \to \infty.$ Again, since $J$ is uniformly continuous on bounded subsets of $E$, (\ref{limzn}) implies $\|Jz_n - Jx_n \| \to 0$. Since $r_n\in [a,\infty),$ we have that 
\begin{eqnarray}\label{223}
 \lim_{n\rightarrow \infty}\frac{||Jz_n - Jx_n||}{r_n}=0.
\end{eqnarray}

\noindent Since $z_n=T_{r_n}x_n$, we have that 
$$\frac{1}{r_n}\langle y - z_n, Jz_n -Jx_n\rangle\geq - f_n(Jz_n,Jy),~~\forall~~y\in C.$$
Let $\{n_l\}_{l = 1}^{\infty} \subset{\mathbb{N}} $ be such that $f_{n_{l}} = f_1 \; \forall \; l \ge 1.$ Then, using (A2), we have 

\begin{eqnarray}\label{224}
 \langle y - z_n, \frac{Jz_n -Jx_n}{r_n}\rangle \geq-f_1(Jz_n,Jy)\geq f_1(Jy,Jz_n),~~\forall~~y\in C.
\end{eqnarray}
Since $f_1(x,\cdot)$ is convex and lower-semicontinuous and $z_n\rightarrow x^*$, it follows from equation (\ref{223}) and inequality (\ref{224}) that
$$f_1(Jy,Jx^*)\leq0,~~\forall~~y\in C.$$
For $t\in(0,1]$ and $y\in C$, let $y^*_t=tJy+(1-t)Jx^*$. Since $JC$ is convex, we have that $y^*_t\in JC$ and hence $f_1(y^*_t,Jx^*)\leq0$. 
Applying (A1) gives,
$$0=f_1(y^*_t,y^*_t)\leq tf_1(y^*_t,Jy)+(1-t)f_1(y^*_t,Jx^*)\leq tf_1(y^*_t,Jy),~~\forall~~y\in C.$$ This implies that
$$f_1(y^*_t,Jy)\geq 0,~~\forall~~y\in C.$$
Letting  $t\downarrow0$ and using (A3), we get 
$$f_1(Jx^*,Jy)\geq 0,~~\forall~~y\in C.$$
Therefore, we have that $Jx^*\in JEP(f_1).$ This implies that $x^*\in EP(f_1).$  Applying similar argument, we can show that $x^*\in EP(f_l)$ for $l = 2,3, \dots , L.$ Hence, $x^*\in\cap_{l=1}^{L} EP(f_{l}).$

\noindent{\bf Step 6}: Finally, we show that $x^* = R_{B}x.$\\
From Lemma \ref{lem2}$(ii)$, we obtain that
\begin{equation}\label{eq4.3}
\phi(x,R_Bx) \leq \phi(x,x^*) - \phi(R_Bx, x^*) \leq  \phi(x,x^*).
\end{equation}

\noindent Again, using Lemma \ref{lem2}$(ii)$, definition of $x_{n+1}$, and $ x^*\in B \subset C_{n},$ we compute as follows:
\begin{eqnarray}
 \phi(x,x_{n+1}) &\leq &\phi(x,x_{n+1}) + \phi(x_{n+1},R_Bx)\nonumber\\
                  & = & \phi( x, R_{C_{n+1}}x) +  \phi(R_{C_{n+1}}x, R_Bx) \leq \phi(x, R_Bx).\nonumber
 \end{eqnarray}
Since $x_{n}\rightarrow x^*$, taking limits on both sides of the last inequality, we obtain
\begin{equation}\label{eq4.4}
\phi(x,x^*) \leq \phi(x, R_Bx).
\end{equation}
Using inequalities (\ref{eq4.3}) and (\ref{eq4.4}), we obtain that $\phi(x,x^*) = \phi(x, R_Bx)$. By the 
uniqueness of $R_B$(Lemma \ref{ww1}), we obtain that $x^*=R_Bx$. This completes proof of the theorem. 
\end{proof}
%\bigskip
%%%%%%%%%%%%%%%%%%%%%%%%%%%%%%%%%%%%%%%%%%%%%%%%%%%%%%%%%%%%%%%%%%%%%%%%%%%%%%%%%%%%%%%%%%%%%%%%%%%%%%%%%%%%%%%%%%%%%
\section{Applications}
\begin{corollary}\label{corollary1}
Let $E$ be a uniformly smooth and uniformly convex real Banach space  with dual space $E^*$ and let $C$ be a nonempty closed and convex subset of $E$ 
such that $JC$ is closed and convex. Let $f$ be a bifunction from $JC\times JC$ to $\mathbb{R}$ satisfying $(A1)-(A4)$, $A:C\rightarrow E^*,$ be a continuous monotone mapping, $T:C\rightarrow E^*,$ be a generalized $J_{*}-$nonexpansive and $J_{*}-$closed map such that $B := F_{J}(T)\cap EP(f) \cap VI(C,A) \neq \emptyset.$ Assume that $JF_{J}(T)$ is convex. Then, $\{x_{n}\}$ generated by (\ref{alg3.1}) converges strongly 
to $R_B x$, where $R_B$ is the sunny generalized nonexpansive retraction of $E$ onto $B$.
\end{corollary}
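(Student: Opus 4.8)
The plan is to obtain Corollary \ref{corollary1} as a direct specialization of Theorem \ref{main}, rather than to rerun the six-step argument. The idea is to choose the families appearing in the theorem so that each collapses to the single object appearing in the corollary, and then check that every hypothesis of the theorem is inherited from the hypotheses of the corollary.

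Concretely, I would set $L = 1$ with $f_1 = f$, set $N = 1$ with $A_1 = A$, and take the infinite family $\{T_n\}$ to be the constant family $T_n = T$ for every $n \ge 1$. With these choices the modular indexing in (\ref{alg3.1}) collapses: $f_{n(\mathrm{mod}\,1)} = f$, $A_{n(\mathrm{mod}\,1)} = A$, and $T_n = T$ for all $n$, so the recursion (\ref{alg3.1}) reduces verbatim to the iteration in the statement of the corollary. Hence the two sequences $\{x_n\}$ coincide, and it suffices to apply the theorem.

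It then remains to verify the structural hypotheses. Taking $\Gamma = \{T\}$, we have $\cap_{n=1}^{\infty} F_J(T_n) = F_J(T) = F_J(\Gamma)$; since the standing assumption $B = F_J(T) \cap EP(f) \cap VI(C,A) \neq \emptyset$ forces $F_J(T) \neq \emptyset$, both nonemptiness requirements $F_J(\Gamma) \neq \emptyset$ and $B \neq \emptyset$ hold. The hypothesis that $JF_J(T)$ is convex is exactly the hypothesis that $JF_J(\Gamma)$ is convex, and the assumption that $T$ is generalized $J_*$-nonexpansive and $J_*$-closed makes $\Gamma$ a family of $J_*$-closed generalized $J_*$-nonexpansive maps, as required. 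Finally, the NST-condition for the constant family $\{T_n\} = \{T\}$ with $\Gamma = \{T\}$ is immediate from Remark \ref{rmk1}.

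Once these verifications are in place, Theorem \ref{main} yields that $\{x_n\}$ converges strongly to $R_B x$, which is precisely the conclusion of the corollary. No new estimate is needed. The only point requiring any care — and it is a genuinely trivial one — is confirming that the constant family satisfies the NST-condition, which Remark \ref{rmk1} supplies outright; I do not anticipate any real obstacle, as the corollary is strictly a special case of the theorem.
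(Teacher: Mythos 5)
Your proposal is correct and matches the paper's own proof: the authors likewise obtain the corollary by taking the constant family $T_n := T$, a single bifunction and a single monotone map, invoking Remark \ref{rmk1} for the NST-condition, and applying Theorem \ref{main}. Your write-up is simply a more explicit verification of the same specialization.
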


\begin{proof}
Set $T_n := T$ for all $n\in \mathbb{N}$, $A := A_i$ for any $i = 1, 2, \cdots , N$, and $f := f_l$ for any $l = 1, 2, \cdots , L$. Then, from remark \ref{rmk1}, $\{T_{n}\}$ satisfies the NST-condition with $\{T\}$. The conclusion follows from Theorem \ref{main}.
\end{proof}

\begin{corollary}\label{corollary2}
Let $E$ be a uniformly smooth and uniformly convex real Banach space  with dual space $E^*$ and let $C$ be a nonempty closed and convex subset of $E$ 
such that $JC$ is closed and convex. Let $f_{l}, l=1, 2, 3, ..., L$ be a family of bifunctions from $JC\times JC$ to $\mathbb{R}$ satisfying $(A1)-(A4)$, 
$T_{n}:C\rightarrow E^*, n=1, 2, 3, ...$ be an infinite family of generalized $J_{*}-$nonexpansive maps and $\Gamma$ be a family of 
$J_{*}-$closed and generalized $J_{*}-$nonexpansive maps from $C$ to $E^*$ such that $\cap_{n=1}^{\infty}F_{J}(T_{n})=F_{J}(\Gamma) \neq \emptyset$
and $B := F_{J}(\Gamma)\cap\Big[\cap_{l=1}^{L} EP(f_{l})\Big] \neq \emptyset.$ Assume that $JF_{J}(\Gamma)$ is convex and 
$\{T_{n}\}$ satisfies the NST-condition with $\Gamma$. Then, $\{x_{n}\}$ generated by (\ref{alg3.1}) converges strongly 
to $R_B x$, where $R_B$ is the sunny generalized nonexpansive retraction of $E$ onto $B$. 
\end{corollary}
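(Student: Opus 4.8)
The plan is to deduce this corollary from Theorem \ref{main} by a suitable specialization, exactly in the spirit of the proof of Corollary \ref{corollary1}. The only structural difference between the two statements is the absence of the variational inequality component $\cap_{k=1}^{N}VI(C,A_k)$ in the set $B$, so the first task is to make that component vacuous. The natural device is to take $N=1$ and let $A_1:=0$, the zero operator from $C$ to $E^*$ (equivalently, set $A_k:=0$ for every $k$).

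First I would check that the zero operator meets the hypotheses imposed on the maps $A_k$ in Theorem \ref{main}, namely continuity and monotonicity. Continuity is immediate, and monotonicity holds because $\langle x-y, 0-0\rangle = 0\ge 0$ for all $x,y\in C$. Next I would compute the associated solution set: by definition $VI(C,0)=\{x^*\in C:\langle y-x^*,0\rangle\ge 0~\forall~y\in C\}=C$, since the inequality $0\ge 0$ holds trivially. Consequently $\cap_{k=1}^{N}VI(C,A_k)=C$, and the set $B$ of Theorem \ref{main} collapses to $F_J(\Gamma)\cap\big[\cap_{l=1}^{L}EP(f_l)\big]\cap C=F_J(\Gamma)\cap\big[\cap_{l=1}^{L}EP(f_l)\big]$, which is precisely the set $B$ of the corollary.

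The one point requiring a short verification is that the iteration (\ref{alg3.1}) stays consistent under this choice and in fact reduces to a scheme adapted to the present setting. By Lemma \ref{ww5}$(iii)$, $F(F_{r_n})=VI(C,0)=C$, so every point of $C$ is a fixed point of $F_{r_n}$. Since $x_n\in C_n\subset C$ for all $n$ by the lemma establishing well-definedness of the scheme, it follows that $u_n=F_{r_n}x_n=x_n$; thus the term $\alpha_3 T_n u_n$ becomes $\alpha_3 T_n x_n$, and the algorithm effectively involves only the equilibrium resolvent $z_n$ and the maps $T_n$, as is appropriate here.

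With these observations in place, all remaining hypotheses of Theorem \ref{main} transfer verbatim: the NST-condition of $\{T_n\}$ with $\Gamma$, the convexity of $JF_J(\Gamma)$, the equalities and nonemptiness $\cap_{n=1}^{\infty}F_J(T_n)=F_J(\Gamma)\neq\emptyset$ and $B\neq\emptyset$, and the conditions $(A1)$--$(A4)$ on the bifunctions. Hence Theorem \ref{main} applies directly and yields the strong convergence of $\{x_n\}$ to $R_B x$. I do not anticipate any genuine analytic obstacle; the argument is a bookkeeping specialization, and the only nonroutine step is the identity $u_n=x_n$, which isolates exactly why discarding the variational inequalities is harmless.
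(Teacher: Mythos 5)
Your proposal is correct and follows essentially the same route as the paper, which simply sets $A_k=0$ for all $k$ and invokes Theorem \ref{main}; your additional checks (that $0$ is continuous and monotone, that $VI(C,0)=C$, and that $u_n=F_{r_n}x_n=x_n$) are accurate elaborations of that one-line reduction.
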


\begin{proof}
Setting $A_k=0$ for any $k=1, 2, 3, ..., N$, then result follows from Theorem \ref{main}.
\end{proof}

\begin{remark}
\noindent We note here that the theorem and corollaries presented above are applicable in classical Banach spaces, such as $L_{p},~l_{p}, or~ W^{m}_{p}(\Omega), 1<p<\infty$, where $W^{m}_{p}(\Omega)$ denotes the usual Sobolev space.
\end{remark}

\begin{remark}\label{remduality}(\cite{yak}; p. 36)
The analytical representations of duality maps are known in a number of Banach spaces, for example, in the spaces $L_p,$ $l_p,$ and
$W^p_m(\Omega),$ $p \in (1,\infty)$, $p^{-1}+q^{-1}=1$.
\end{remark}

\begin{corollary}\label{corollary6}
Let $E=H$, a real Hilbert space and let $C$ be a nonempty closed and convex subset of $H$. Let $f_{l}, l=1, 2, 3, ..., L$ be a family of bifunctions from $C\times C$ to $\mathbb{R}$ satisfying $(A1)-(A4)$, 
$T_{n}:C\rightarrow H, n=1, 2, 3, ...$ be an infinite family of nonexpansive maps, $A_{k}:C\rightarrow H, k=1, 2, 3, ..., N$ be a finite family of continuous monotone mappings and $\Gamma$ be a family of 
closed and generalized nonexpansive maps from $C$ to $H$ such that $\cap_{n=1}^{\infty}F(T_{n})=F(\Gamma) \neq \emptyset$
and $B := F(\Gamma)\cap\Big[\cap_{l=1}^{L} EP(f_{l})\Big]\cap \Big[\cap_{k=1}^{N}VI(C,A_{k})\Big] \neq \emptyset.$ Assume that $\{T_{n}\}$ satisfies the NST-condition with $\Gamma$.
Let $\{x_{n}\}$ be generated by:  

\begin{equation}\label{algcoro5}
\begin{cases}  & x_{1} = x\in C; C_{1}=C, \cr
& z_{n} := \{z\in C:f_n(z,y)+\frac{1}{r_n}\langle y - z, z - x_n\rangle\geq0,~~\forall~~y\in C\},\cr
& u_{n} := \{z\in C: \langle y - z, A_n z \rangle + \frac{1}{r_n}\langle y - z, z - x_n\rangle\geq 0,~~\forall~~y\in C\},\cr
                     & y_n =\alpha_{1}Jx_{n} + \alpha_{2} z_n + \alpha_3  T_{n}u_{n},\cr
                     
                    & C_{n+1} =\{z\in C_{n} : ||z - y_{n}|| \leq ||z - x_{n}||\},\cr
                     & x_{n+1} = P_{C_{n+1}}x,
\end{cases} 
\end{equation}

for all $n\in \mathbb{N}, \; \alpha_1, \alpha_2, \alpha_3\in (0,1)$ such that $\alpha_1 + \alpha_2 + \alpha_3 = 1$,  $\{r_n\}\subset [a,\infty)$
 for some $a>0$, $A_n = A_{n(mod~  N)}$ and $f_n(\cdot,\cdot) = f_{n(mod ~L)}(\cdot,\cdot)$. Then, $\{x_{n}\}$ converges strongly 
to $P_B x$, where $P_B$ is the metric projection of $H$ onto $B$. 
\end{corollary}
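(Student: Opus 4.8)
The plan is to obtain Corollary \ref{corollary6} as the direct specialization of Theorem \ref{main} to the self-dual setting $E=H$. The first thing I would record is that when $E=H$ is a real Hilbert space, the canonical identification $H^*\cong H$ makes the normalized duality map the identity, $J=J_*=I_H$. Consequently $JC=C$, which is closed and convex by hypothesis, so the standing assumption ``$JC$ is closed and convex'' of Theorem \ref{main} holds automatically; moreover the Lyapunov functional collapses to $\phi(x,y)=\|x-y\|^2$, as already noted after (\ref{Lya}).

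With $J=I_H$ in force, I would next check term by term that the iteration (\ref{algcoro5}) is precisely (\ref{alg3.1}). The resolvents $z_n$ and $u_n$ are unchanged once $Jx_n$ is read as $x_n$ and $Jz_n$ as $z_n$; the averaging step $y_n=J^{-1}(\alpha_1 Jx_n+\alpha_2 Jz_n+\alpha_3 T_n u_n)$ becomes the plain convex combination $y_n=\alpha_1 x_n+\alpha_2 z_n+\alpha_3 T_n u_n$; and the defining inequality of $C_{n+1}$ satisfies $\phi(z,y_n)\le\phi(z,x_n)\Longleftrightarrow\|z-y_n\|\le\|z-x_n\|$. Finally, by Lemma \ref{lem2}$(i)$ the characterization $z=R_C x\Longleftrightarrow\langle x-z,Jy-Jz\rangle\le 0\ \forall y\in C$ reduces, via $J=I_H$, to $\langle x-z,y-z\rangle\le 0\ \forall y\in C$, which is exactly the variational inequality characterizing the metric projection; hence $R_{C_{n+1}}=P_{C_{n+1}}$ and $R_B=P_B$. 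Under $J=I_H$ the notions ``generalized $J_*$-nonexpansive'', ``$J$-fixed point'', and ``$J_*$-closed'' likewise reduce respectively to ``generalized (quasi-)nonexpansive'' (cf. Definition \ref{gen nonexp}), ``fixed point'', and ``closed'', and $F_J(T)=F(T)$.

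The one hypothesis of Theorem \ref{main} that is \emph{not} stated in Corollary \ref{corollary6} is the convexity of $JF_J(\Gamma)$, and supplying it is the only nonroutine point. Since $J=I_H$, this amounts to showing $F(\Gamma)=\bigcap_{T\in\Gamma}F(T)$ is convex, for which it suffices that each $F(T)$ be convex. Here I would invoke the classical fact that in a Hilbert space the fixed-point set of a quasi-nonexpansive map is closed and convex: for $p,q\in F(T)$ and the midpoint $m=\tfrac12(p+q)$, the quasi-nonexpansiveness gives $\|Tm-p\|\le\tfrac12\|p-q\|$ and $\|Tm-q\|\le\tfrac12\|p-q\|$, whence $\|p-q\|\le\|p-Tm\|+\|Tm-q\|\le\|p-q\|$ forces equality throughout, and by strict convexity of $H$ this yields $Tm=m$; the same argument applied to arbitrary convex combinations (or a short induction) gives convexity of $F(T)$, and intersections of convex sets are convex. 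With the convexity of $JF_J(\Gamma)$ thus secured, every hypothesis of Theorem \ref{main} is met, and the strong convergence of $\{x_n\}$ to $P_B x=R_B x$ follows immediately. The main obstacle is therefore the bookkeeping of this single omitted hypothesis; once it is seen to be automatic in the Hilbert setting, the corollary is a straightforward instance of the theorem.
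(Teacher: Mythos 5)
Your proposal is correct and follows the same route as the paper, which simply observes that in a Hilbert space $J$ is the identity and $\phi(x,y)=\|x-y\|^2$ and then invokes Theorem \ref{main}. Your additional verification that the omitted hypothesis ``$JF_J(\Gamma)$ is convex'' holds automatically --- because the fixed-point set of a quasi-nonexpansive map in a Hilbert space is convex --- is a genuine point that the paper's two-line proof passes over in silence, and your midpoint argument for it is sound.
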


\begin{proof}
In a Hilbert space, $J$ is the identity operator and $\phi(x,y)=||x-y||^{2} ~ \text{for all} ~ x,y\in H$. The result follows from Theorem \ref{main}.
\end{proof}

\begin{example}
Let $E = l_p$, $1< p <\infty$, $\frac{1}{p} + \frac{1}{q} = 1$, and $C = \overline{B_{l_p}}(0,1)$ = $\{x \in l_p : ||x||_{l_p}\leq 1\}$. Then $JC = \overline{B_{l_q}}(0,1)$. Let $f : JC\times JC \longrightarrow \mathbb{R}$ defined by $f(x^*, y^*) = \langle J^{-1}x^*, x^* - y^*\rangle$ $\forall$ $x^* \in JC$, $A : C \longrightarrow l_q$ defined by $Ax = J(x_1, x_2, x_3, \cdots)$ $\forall$ $x = (x_1, x_2, x_3, \cdots) \in C$, $T : C \longrightarrow l_q$ defined by $Tx = J(0, x_1, x_2, x_3, \cdots)$ $\forall$ $x = (x_1, x_2, x_3, \cdots) \in C$, and $T_n : C \longrightarrow l_q$ defined by $T_{n}x = \alpha_nJx + (1 - \alpha_n)Tx, ~\forall n \geq 1,~\forall~ x\in C, \alpha_n \in (0,1) \text{ such that } 1 - \alpha_n \ge \frac{1}{2}$. Then $C$, $JC$, $f$, $A$, $T$, and $T_n$ satisfy the conditions of Corollary 4.1. Moreover, $0 \in F_{J}(\Gamma) \cap EP(f) \cap  VI(C,A)$.
\end{example}

\section{Conclusion}
\noindent Our theorem and its applications complement, generalize, and extend results of Uba et al. \cite{uoo}, Zegeye and Shahzad \cite{zesh}, Kumam \cite{kuman}, Qin and Su \cite{Qsu}, and Nakajo and Takahashi \cite{nakajok}. Theorem \ref{main} is a complementary analogue and extension of Theorem 3.2 of \cite{zesh} in the following sense: while Theorem 3.2 of \cite{zesh} is proved for a finite family of {\it self-maps} in uniformly smooth and strictly convex real Banach space which has the Kadec–Klee property, Theorem \ref{main} is proved for countable family of {\it non-self maps} in uniformly smooth and uniformly convex real Banach space; in Hilbert spaces, Corollary \ref{corollary6} is an extension of Corollary 3.5 of \cite{zesh} from {\it finite family of nonexpansive self-maps} to {\it countable family of nonexpansive non-self maps}. Additionally, Theorem \ref{main} extends and generalizes Theorem 3.7 of \cite{uoo} in the following sense: while Theorem 3.7 of \cite{uoo} studied equilibrium problem and countable family of {\it generalized $J_*-$nonexpansive non-self maps}, Theorem \ref{main} studied finite family of equilibrium and variational inequality problems and countable family of {\it generalizes $J_*-$nonexpansive non-self maps}; corollary \ref{corollary2} generalized Theorem 3.7 of \cite{uoo} to a finite family of equilibrium problems and countable family of {\it generalized $J_*-$nonexpansive non-self maps}. Furthermore, Corollary \ref{corollary1} extends Theorem 3.1 of \cite{kuman} from Hilbert spaces to a more general uniformly smooth and uniformly
convex Banach spaces and to a more general class of continuous monotone mappings.
Finally, Corollary \ref{corollary1} improves and extends the results in \cite{Qsu, nakajok} from {\it a nonexpansive self-map} to {\it a generalized $J_*-$nonexpansive non-self map}.
%%%%%%%%%%%%%%%%%%%%%%%%%%%%%%%%%%%%%%%%%%%%%%%%%%%%%%%%%%%%%%%%%%%%%%%%%%%%%%%%%%%%%%%%%%%%

\end{document}